\title{The structure and normalized volume of Monge polytopes}
\author{William Q. Erickson}
\address{
William Q.~Erickson\\
Department of Mathematics\\
Baylor University \\ 
One Bear Place \#97328\\
Waco, TX 76798} 
\email{Will\_Erickson@baylor.edu}
\author{Jan Kretschmann}
\address{
Jan Kretschmann\\
Department of Mathematical Sciences\\
University of Wisconsin--Milwaukee \\ 
3200 N.~Cramer St.\\
Milwaukee, WI 53211} 
\email{kretsc23@uwm.edu}
\theoremstyle{plain}
\newtheorem{theorem}{Theorem}[section]
\newtheorem{prop}[theorem]{Proposition}
\newtheorem{lemma}[theorem]{Lemma}
\newtheorem{corollary}[theorem]{Corollary}
\theoremstyle{definition}
\newtheorem{definition}[theorem]{Definition}
\newtheorem{rem}[theorem]{Remark}
\newtheorem{ex}[theorem]{Example}
\newcommand{\M}{\mathcal{M}}
\newcommand{\SM}{\mathcal{SM}}
\newcommand{\AM}{\mathcal{HM}}
\renewcommand{\P}{\mathcal{P}}
\newcommand{\NE}[1]{\lceil #1 \rceil}
\newcommand{\SW}[1]{\lfloor #1 \rfloor}
\newcommand{\HOR}[1]{[ #1 \bullet ]}
\newcommand{\VER}[1]{[ \bullet #1 ]}
\newcommand{\HV}[1]{\llbracket #1 \rrbracket}
\newcommand{\NESW}[1]{[ #1 ]}
\subjclass[2020]{Primary 52B05; Secondary 90C27, 52B12}
\keywords{Combinatorial optimization, transportation problem, traveling salesman problem, Monge matrix, polytope, Stanley decompositions}
\begin{document}

\begin{abstract}
    A matrix $C$ has the \emph{Monge property} if $c_{ij} + c_{IJ} \leq c_{Ij} + c_{iJ}$ for all $i < I$ and $j < J$.
    Monge matrices play an important role in combinatorial optimization; for example, when the transportation problem (resp., the traveling salesman problem) has a cost matrix which is Monge, then the problem can be solved in linear (resp., quadratic) time.
    For given matrix dimensions, we define the \emph{Monge polytope} to be the set of nonnegative Monge matrices normalized with respect to the sum of the entries.
    In this paper, we give an explicit description and enumeration of the vertices, edges, and facets of the Monge polytope; these results are sufficient to construct the face lattice.
    In the special case of two-row Monge matrices, we also prove a polytope volume formula.
    For \emph{symmetric} Monge matrices, we show that the Monge polytope is a simplex and we prove a general formula for its volume.
\end{abstract}

\maketitle

\section{Introduction}

\subsection{Monge matrices in combinatorial optimization}

Two of the best-known combinatorial optimization problems are the \emph{transportation problem} (TP) and the \emph{traveling salesman problem} (TSP).
The former is solvable in polynomial time while the latter is famously NP-hard.
But in the special case where the cost matrix $C$ has the \emph{Monge property}, i.e.,
\[
c_{ij} + c_{IJ} \leq c_{Ij} + c_{iJ} \quad \text{for all $i<I$ and $j < J$,}
\]
the TP can be solved by a greedy algorithm (the ``northwest corner rule'') in linear time, while the TSP can be solved by finding a pyramidal tour in $O(n^2)$ time.
These results are due to Hoffman~\cite{hoffman} and Gilmore--Lawler--Shmoys~\cite{Gilmore}, respectively.
(Hoffman named the property after the 18th-century geometer Gaspard Monge, who, apart from running a rather famous pyramidal tour of his own,\footnote{Several sources relate that on a hot summer day during his campaign in Egypt, Napoleon Bonaparte proposed that his companions compete in a race to ascend a pyramid.
The winner, evidently, was the 53-year old Monge.} first explored the property in his treatise~\cite{Monge} launching the field of optimal transport theory.)
Moreover, when the TSP has a \emph{symmetric} Monge matrix, as is natural in many applications,  Supnick~\cite{Supnick} showed that an optimal solution is a fixed tour which does not even depend on the actual values of the matrix.

The TP and TSP are just two of the many optimization problems whose solution time lessens drastically (or even becomes trivial) when the underlying structure is given by a Monge matrix.
We refer the reader to the excellent comprehensive survey~\cite{Burkard} devoted to the multitude of applications of Monge matrices, a list which has only grown since its publication; see also Villani's monumental reference~\cite{Villani}.

\subsection{Motivation}

It is not hard to see that the set of $p \times q$ Monge matrices (or symmetric $n \times n$ Monge matrices) with nonnegative entries forms a pointed cone, whose extremal rays were described by Rudolf and Woeginger~\cite{Rudolf}.
With this as our starting point, our present paper is founded on the following simple observation: the optimal solution to the TP or TSP is invariant under multiplying the cost matrix $C$ by scalars $\lambda \in \mathbb{R}_{>0}$.
This is because in either problem, the solution is simply a matrix $T$ (called a \emph{transportation plan} or \emph{flow matrix} in the literature) which minimizes the Hadamard product $\sum_{i,j} c_{ij} t_{ij}$.
In other words, in the cone of Monge matrices, one can view every ray through $0$ as an equivalence class, since all the points (i.e., cost matrices) on that ray yield the same solution to the relevant optimization problem.
It seems natural to eliminate this redundancy by normalizing: hence in this paper, we study the intersection of the Monge cone with the probability simplex (i.e., where all matrix entries are nonnegative and sum to $1$).
The result is a bounded polytope, which we call the \emph{Monge polytope}.
From this perspective, the points in the Monge polytope are precisely the \textit{distinct} cost matrices with respect to the TP, the TSP, and other optimization problems.
We define analogously the \emph{symmetric Monge polytope} and the \emph{hollow symmetric Monge polytope} (where a \emph{hollow} matrix has all zeros on its diagonal).
All three of these polytopes correspond to cost matrices which arise naturally in different variants of the TP and TSP and many other problems.

We emphasize that the polytopes in this paper are quite different from those studied in recent work applying convex geometry to optimal transport theory; see~\cite{Friesecke} and~\cite{Vogler} and the references therein.
The authors cited above studied \emph{Monge} and \emph{Kantorovich polytopes} in which the points are \emph{solutions}, rather than cost functions, for transport problems.

\subsection{Main results}

The main result of this paper is a description of the structure of the various Monge polytopes, along with their \emph{normalized volume}, i.e., the proportion of the probability simplex which they occupy.
We devote Section~\ref{sec:symmetric results} to the symmetric case.
It turns out (Theorem~\ref{thm:prob HM}) that the hollow symmetric (resp., the symmetric) Monge polytopes are simplices, with (respective) normalized volumes
\[
\frac{1}{{\rm sf}(n-1)^2} \quad \text{and} \quad \frac{1}{{\rm sf}(n-1)^2 \cdot n^n},
\]
where ${\rm sf}(n) \coloneqq 1!2!3!\cdots n!$ is the \emph{superfactorial} as defined by Sloane and Plouffe~\cite{Sloane}.
We emphasize (Section~\ref{sub:metric face}) that the hollow symmetric Monge polytope has a distinguished face consisting precisely of those cost matrices which induce a metric space structure in the context of the TP or TSP. 

In contrast with the symmetric Monge polytopes, the polytope of generic $p \times q$ Monge matrices has a much more complicated structure, which we describe in Section~\ref{sec:generic results}.
In particular, we give an explicit description and enumeration of its vertices and facets (Proposition~\ref{prop:Mpq vertices} and Theorem~\ref{thm:facets generic}).
Using these results, one can program the vertex--facet incidence matrix, from which the face lattice can be constructed.
We also give an explicit description and enumeration of the edges (Theorem~\ref{thm:edges generic}).
Finally, we take a first step toward a general formula for the normalized volume, by showing (Theorem~\ref{thm:volume 2-row}) that for $2 \times p$ matrices this volume is $1/p!$.

\subsection{Future research and open problems}

The results and methods in this paper suggest some further problems of a combinatorial nature.
We are interested, for instance, in finding a bijective proof of the fact mentioned below in Remark~\ref{rem:OEIS vertices square}: for $p \times p$ Monge matrices, the number of polytope vertices equals the number of regions obtained by intersecting $p$ ellipses in the plane.
The most obvious open problem is to find a general volume formula for the generic Monge polytope, beyond the two-row case (see the closing paragraph of the paper).
Our proof of the volume formula in the two-row case involves our writing down a Stanley decomposition for the monoid of Monge matrices with nonnegative integer entries; it seems that such a decomposition (for arbitrary matrix dimensions) might be of intrinsic interest in applications, since it leads to a canonical way of writing Monge matrices as linear combinations of vertices.
Finally, since a cost matrix with the Monge property improves computation times so dramatically, it seems worth studying whether the characterization of the Monge polytope in this paper may be useful in approximating a non-Monge matrix by the ``nearest'' Monge matrix, in the appropriate sense.

\section{Preliminaries}

\subsection{Convex polytopes}

The following terminology and results can be found in any standard reference on polytopes; see, for example,~\cite{Ziegler}*{Ch.~1--2}.

Consider a finite subset $W\subseteq \mathbb{R}^d$. 
The \emph{convex hull} of $W$ defines a polytope in $\mathbb{R}^d$:
\[
\mathcal{P} = \mathrm{conv}(W) \coloneqq \left\{\sum_{w \in W} \lambda_w w : \lambda_w \geq 0, \: \sum_{w \in W} \lambda_w = 1\right\}.
\]
The \emph{dimension} of $\mathcal{P}$ is the dimension of its affine span. 
If, in addition, each $w \in W$ cannot be expressed as a convex combination of any other points in $\mathcal{P}$, then we call the elements of $W$ the \emph{vertices} of $\mathcal{P}$, and we call $W$ the \emph{vertex set} of $\mathcal{P}$, denoted by $V(\mathcal{P}) = W$.
This description of a polytope $\mathcal{P}$ in terms of its vertices is called the \emph{$V$-representation} of $\mathcal{P}$.
A \emph{face} is a subset $F\subseteq \mathcal{P}$ with the following property: if $x,y\in \mathcal{P}$, then $\lambda x+(1-\lambda)y \in F$ implies $x,y \in F$ for $0 \leq \lambda \leq 1$.
A \emph{vertex} is thus a $0$-dimensional face.
An \emph{edge} is a $1$-dimensional face, while a \emph{facet} is a face whose dimension is one less than $\dim \mathcal{P}$.
The \emph{$f$-vector} of $\P$ is the vector $(f_0, f_1, \ldots, f_d)$, where $f_i$ denotes the number of $i$-dimensional faces of $\P$.

Alternatively, consider a \emph{half-space}, i.e., the set of points lying weakly to one side of an affine hyperplane in $\mathbb{R}^d$.
The bounded intersection of finitely many half-spaces is a convex polytope.
This description of a polytope $\mathcal{P}$ in terms of a finite system of linear inequalities (corresponding to the system of half spaces whose intersection defines $\mathcal{P}$) is called the \emph{$H$-representation} of $\mathcal{P}$.
From this viewpoint, a \emph{face} of $\mathcal{P}$ is defined as a nonempty set of points at which some subsystem of the defining inequalities is \emph{tight}, i.e., the inequalities are actually equalities.
(This can be restated as saying that a face is the intersection of $\P$ with certain of its supporting hyperplanes.)
In particular, supposing that the defining system of inequalities is minimal (i.e., contains no redundant inequalities), a \emph{facet} consists of those points at which exactly one of the defining inequalities is tight.
It is a standard fact that the two definitions of a face (one for the $V$-representation, the other for the $H$-representation) are in fact equivalent.

Recall that a \emph{simplex} is a polytope $\mathcal{P}$ such that $\#V(\mathcal{P}) = \dim \mathcal{P} + 1$.
In this case, every nonempty subset of $V(\mathcal{P})$ defines a face of $\mathcal{P}$ via its convex hull.

\subsection{Three types of Monge polytopes}

A real matrix $C$ is said to have the \emph{Monge property} if it satisfies the inequalities
\begin{equation}
    \label{Monge defining inequality}
    c_{ij} + c_{IJ} \leq c_{iJ} + c_{Ij} \quad \text{for all $i < I$ and $j < J$}.
\end{equation}
Throughout the paper, we will use the letter $C$ when referring to a Monge matrix (this convention has arisen from the \emph{cost} matrix in an optimization problem).
As mentioned in the introduction, we wish to study the Monge matrices modulo multiplication by scalars.
Therefore we will restrict our attention to the probability simplex 
\begin{equation}
\label{Ppq}
    \P_{p \times q} \coloneqq \Big\{ A \in \mathbb{R}_{\geq 0}^{p \times q} : \sum_{i,j} a_{ij} = 1 \Big\}
\end{equation}
inside the space of matrices.
Note that $\P_{p \times q}$ is actually the polytope defined by the inequalities
\begin{equation}
    \label{nonnegative}
    c_{ij} \geq 0 \quad \text{for all $i,j$},
\end{equation}
intersected with the affine hyperplane defined by the sum-one condition.
We define the \emph{Monge polytope} to be
\[
\M_{p \times q} \coloneqq \P_{p \times q} \cap \{\text{Monge matrices} \},
\]
which is a polytope of dimension $pq-1$, defined by the inequalities~\eqref{Monge defining inequality} and contained in the affine hyperplane containing $\P_{p \times q}$.

It is natural in many applications to restrict our attention to $n \times n$ symmetric Monge matrices (also called \emph{Supnick matrices}), by imposing the conditions
\begin{equation*}
    c_{ij} = c_{ji} \quad \text{for all $1 \leq i \leq j \leq n$.}
\end{equation*}
In certain applications it is necessary to restrict one's attention even further to \emph{hollow} symmetric matrices, i.e., matrices with all zeros on the diagonal:
\begin{equation*}
    c_{ii} = 0 \quad \text{for all $1 \leq i \leq n$}.
\end{equation*}
We will therefore also consider the following subsimplices of the probability simplex:
\begin{align}
\label{HP and SP}
\begin{split}
    \mathcal{HP}_{n} &\coloneqq \P_{n \times n} \cap \{ \text{hollow symmetric matrices} \},\\
    \mathcal{SP}_{n} &\coloneqq \P_{n \times n} \cap \{ \text{symmetric matrices} \}.
    \end{split}
\end{align}
Then we define the \emph{hollow symmetric} (resp., the \emph{symmetric}) \emph{Monge polytope} just as before:
\begin{align*}
    \AM_n &\coloneqq \mathcal{HP}_n \cap \{ \text{Monge matrices}\},\\
    \SM_n &\coloneqq \mathcal{SP}_n \cap \{ \text{Monge matrices}\}.
\end{align*}
It is clear that $\AM_{n}$ is a polytope of dimension $\frac{n(n-1)}{2}-1$, while $\SM_n$ is a polytope of dimension $\frac{n(n+1)}{2} - 1$.
This follows from taking the number of degrees of freedom in the $n \times n$ matrix, and subtracting one to account for the intersection with the affine hyperplane containing $\P_{n \times n}$.

Note that the Monge property~\eqref{Monge defining inequality} gives the H-representation of the Monge polytopes.
The first goal in this paper will be to obtain the V-representation (see Propositions~\ref{prop:SM vertices} and~\ref{prop:Mpq vertices}).

\subsection{Asymptotics of rational generating functions}

Given a rational function $f(t)$, its Taylor expansion yields a formal power series $f(t) = \sum_{k = 0}^\infty f_k t^k$, which is called the \emph{generating function} of the sequence $\{f_k\}_{k=0}^\infty$ of coefficients.  
By viewing $f(t)$ as a meromorphic function, one can obtain asymptotic information about the coefficients $f_k$.
We record the following standard result in analytic combinatorics, which follows from~\cite{FS}*{Thm.~IV.9}, with proof given in~\cite{Melczer}*{Ch.~12, corollary to Lemma~3}:

\begin{lemma}
    \label{lemma:asymptotic}
    Let $N(t)$ and $D(t)$ be polynomials, with $\deg N < \deg D$ and with $D(0)=1$.
    Let $f(t) \coloneqq \frac{N(t)}{D(t)} = \sum_{k=0}^\infty f_k t^k$.
    Let $\alpha \in \mathbb{C}$ be a root of $D(t)$ but not of $N(t)$, such that $|\alpha| \leq |\beta|$ for all roots $\beta$ of $D(t)$ over the complex numbers.
    Let $m$ denote the multiplicity of $\alpha$ as a root of $D(t)$.
    Suppose that $m$ is greater than the multiplicity of $\beta$ for all roots $\beta$ such that $|\alpha| = |\beta|$.
    Then we have
    \[
    f_k = (-1)^{m} \cdot \frac{m N(\alpha)}{\alpha^{m+k} D^{(m)}(\alpha)} k^{m-1} + O(k^{m-2}).
    \]
\end{lemma}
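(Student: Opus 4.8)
The plan is to use a partial fraction decomposition of $f(t)$ and then read off the asymptotics of the coefficients term by term, isolating the dominant contribution coming from the root $\alpha$ of minimal modulus. First, since $D(0)=1$, I would factor $D(t)=\prod_{\beta}(1-t/\beta)^{m_\beta}$, where the product runs over the distinct roots $\beta$ of $D$ and $m_\beta$ is the multiplicity of $\beta$ (so that $m_\alpha = m$). Because $\deg N < \deg D$, there is no polynomial part, and partial fractions give
\[
f(t) = \sum_{\beta}\sum_{j=1}^{m_\beta} \frac{a_{\beta,j}}{(1-t/\beta)^j}
\]
for suitable constants $a_{\beta,j}\in\mathbb{C}$. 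Expanding each summand as a binomial series, the coefficient of $t^k$ in $(1-t/\beta)^{-j}$ is $\binom{k+j-1}{j-1}\beta^{-k}$, so that $f_k = \sum_{\beta}\sum_{j=1}^{m_\beta} a_{\beta,j}\binom{k+j-1}{j-1}\beta^{-k}$.

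Next I would identify the dominant term. Since $\binom{k+j-1}{j-1}$ is a polynomial in $k$ of degree $j-1$ with leading coefficient $1/(j-1)!$, the contribution of each root $\beta$ grows like $k^{m_\beta-1}|\beta|^{-k}$, governed by its top term $j=m_\beta$. Among roots of equal modulus $|\beta|=|\alpha|$, the hypothesis $m > m_\beta$ guarantees that $\alpha$ contributes the strictly largest power of $k$, while roots with $|\beta|>|\alpha|$ contribute exponentially smaller terms. Hence every summand except $a_{\alpha,m}\binom{k+m-1}{m-1}\alpha^{-k}$ is absorbed into an error of order $O(k^{m-2})$ measured against the scale $|\alpha|^{-k}$, and the surviving main term is asymptotic to $a_{\alpha,m}\,\tfrac{k^{m-1}}{(m-1)!}\,\alpha^{-k}$.

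The remaining task, which requires the most care, is to express the leading coefficient $a_{\alpha,m}$ in the stated form. Writing $D(t)=(1-t/\alpha)^m g(t)$ with $g(\alpha)\neq 0$, the standard cover-up limit gives $a_{\alpha,m} = \lim_{t\to\alpha}(1-t/\alpha)^m f(t) = N(\alpha)/g(\alpha)$. To convert $g(\alpha)$ into $D^{(m)}(\alpha)$, I would apply the Leibniz rule to $D = (1-t/\alpha)^m g$: every term in which $(1-t/\alpha)^m$ is differentiated fewer than $m$ times still carries a positive power of $(1-t/\alpha)$ and so vanishes at $t=\alpha$, leaving only
\[
D^{(m)}(\alpha) = \frac{(-1)^m m!}{\alpha^m}\,g(\alpha).
\]
Substituting $g(\alpha) = (-1)^m\alpha^m D^{(m)}(\alpha)/m!$ and simplifying $m!/(m-1)! = m$ then collapses the leading term to $(-1)^m\,\tfrac{m N(\alpha)}{\alpha^{m+k}D^{(m)}(\alpha)}\,k^{m-1}$, as claimed.

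The main obstacle here is not conceptual but a matter of bookkeeping: correctly tracking the sign and the powers of $\alpha$ through the Leibniz computation, and verifying that the strict-multiplicity hypothesis is genuinely what prevents competing roots of the same modulus from contributing at the leading order $k^{m-1}$. In particular, one must confirm that both the lower-order partial-fraction terms attached to $\alpha$ itself (those with $j<m$) and the full contributions of all other roots are uniformly $O(k^{m-2})$ on the scale $|\alpha|^{-k}$, so that the stated error term is justified.
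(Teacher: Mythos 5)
Your proof is correct. The paper does not actually prove this lemma---it cites it as a standard result, pointing to Flajolet--Sedgewick (Thm.~IV.9) and Melczer---and your partial-fraction argument is precisely the standard proof given in those references: expand $f$ over the factorization $D(t)=\prod_\beta(1-t/\beta)^{m_\beta}$, read off $[t^k](1-t/\beta)^{-j}=\binom{k+j-1}{j-1}\beta^{-k}$, isolate the top term at $\alpha$, and convert $a_{\alpha,m}=N(\alpha)/g(\alpha)$ into the stated form via the Leibniz computation $D^{(m)}(\alpha)=(-1)^m m!\,\alpha^{-m}g(\alpha)$; your sign and power bookkeeping checks out. One small point in your favor: as you note, the error term is really $O(k^{m-2}|\alpha|^{-k})$ rather than the literal $O(k^{m-2})$ of the statement; these coincide in the only case the paper uses ($\alpha=1$), so your more careful phrasing is the right reading of the lemma.
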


In this paper, we will encounter only the special case where $D(t) = \prod_{i=1}^m (1-t^{p_i})$ for positive integers $p_i$.
Then we have $\alpha = 1$ with multiplicity $m$, which is maximal among the multiplicities of all other roots of $D(t)$, which all have modulus 1.
After applying the Leibniz rule $m$ times to $D(t)$, the only terms that do not vanish at $t=1$ are the $m!$ terms of the form $\prod_{i=1}^m \frac{d}{dt}(1-t^{p_i}) = (-1)^m \prod_{i=1}^m p_i$.
 Upon simplifying, we obtain the following specialization of Lemma~\ref{lemma:asymptotic}: 

\begin{corollary}
\label{cor:asymptotic special}
    Assume the hypotheses of Lemma~\ref{lemma:asymptotic}, in the special case $D(t) = \prod_{i=1}^m (1-t^{p_i})$ for positive integers $p_i$.
    Then we have
    \[
    f_k = \frac{N(1)}{(m-1)! \prod_{i=1}^m p_i} k^{m-1} + O(k^{m-2}).
    \]
\end{corollary}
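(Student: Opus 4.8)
The plan is to apply Lemma~\ref{lemma:asymptotic} with $\alpha = 1$ and then pin down the constant in its asymptotic formula by computing $D^{(m)}(1)$ explicitly. First I would check that the lemma applies with this choice of $\alpha$. Each factor $1 - t^{p_i}$ vanishes at $t=1$ but has nonzero derivative there (namely $-p_i$), so $t=1$ is a simple root of each factor and hence a root of $D(t) = \prod_{i=1}^m (1-t^{p_i})$ of multiplicity exactly $m$; since $N(1) \ne 0$ by hypothesis, $\alpha = 1$ is a root of $D$ but not of $N$. Every root of $D$ is a root of unity, so all lie on the unit circle, giving both $|\alpha| \le |\beta|$ and $|\alpha| = |\beta|$ for all roots $\beta$. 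The remaining hypothesis of Lemma~\ref{lemma:asymptotic} (that the multiplicity $m$ of $\alpha = 1$ strictly exceeds the multiplicity of every equal-modulus root) is precisely one of the hypotheses we are assuming, so the lemma applies.

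The crux is the evaluation of $D^{(m)}(1)$. Writing $g_i(t) \coloneqq 1 - t^{p_i}$, the general Leibniz rule for the $m$-th derivative of the $m$-fold product gives
\[
D^{(m)}(t) = \sum_{k_1 + \cdots + k_m = m} \binom{m}{k_1, \ldots, k_m} \prod_{i=1}^m g_i^{(k_i)}(t),
\]
the sum running over all $(k_1, \ldots, k_m) \in \mathbb{Z}_{\ge 0}^m$ summing to $m$. The key observation is that upon setting $t = 1$, every term indexed by a tuple with some $k_i = 0$ vanishes, since it retains the factor $g_i(1) = 0$. As the $m$ nonnegative exponents $k_i$ sum to $m$, the only tuple with all $k_i \ge 1$ is $(1, 1, \ldots, 1)$. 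Hence a single term survives, and using $g_i'(1) = -p_i$ together with $\binom{m}{1, \ldots, 1} = m!$ we obtain
\[
D^{(m)}(1) = m! \prod_{i=1}^m g_i'(1) = (-1)^m\, m! \prod_{i=1}^m p_i.
\]

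Finally I would substitute $\alpha = 1$ and this value into the formula of Lemma~\ref{lemma:asymptotic}: the factor $\alpha^{m+k} = 1$ drops out, the two factors of $(-1)^m$ cancel, and $m/m! = 1/(m-1)!$, leaving exactly the claimed expression
\[
f_k = \frac{N(1)}{(m-1)!\,\prod_{i=1}^m p_i}\, k^{m-1} + O(k^{m-2}).
\]
I do not anticipate a genuine obstacle, since this is a specialization rather than a new result; the only point requiring care is the bookkeeping in the Leibniz expansion, which collapses to one term precisely because the order of differentiation equals the number of factors, each vanishing at $t = 1$.
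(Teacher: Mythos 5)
Your proposal is correct and follows essentially the same route as the paper: both verify that $\alpha=1$ has multiplicity $m$, use the Leibniz rule to see that the only surviving terms in $D^{(m)}(1)$ are those where each factor is differentiated exactly once (giving $(-1)^m\,m!\prod_{i=1}^m p_i$), and then substitute into Lemma~\ref{lemma:asymptotic}. Your write-up is just a more detailed version of the paper's sketch.
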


Another useful fact will be the asymptotic behavior of the binomial coefficients
\begin{equation}
\label{composition asymptotic}
\binom{k+\ell}{\ell} \coloneqq \frac{(k+\ell)!}{k!\ell!} = \frac{(k+\ell) (k + \ell -1) \cdots (k+1)}{\ell!} = \frac{k^\ell}{\ell!} + O(k^{\ell-1}). 
\end{equation}

\section{Notation for special matrices}

It turns out that the vertices of the Monge polytopes are matrices with very particular block forms.
Since these vertices play a central role in expressing our results, we will denote them using symbols that resemble the appearance of the matrices.

Let $\mathbf{1}_{a \times b}$ be the $a \times b$ matrix whose entries are all 1's.
Given $p>a$ and $q>b$, we define the following $p \times q$ matrices, written below as $2 \times 2$ block matrices:

\[
    \NE{a \times b} \coloneqq \begin{bmatrix}
                0 & \mathbf{1}_{a \times b} 
                \\ 0 & 0
            \end{bmatrix}, \qquad
            \SW{a \times b} \coloneqq \begin{bmatrix}
                0 & 0
                \\ \mathbf{1}_{a \times b} & 0
            \end{bmatrix}.
        \]
        Next, let $\mathbf{e}_i\in \mathbb{R}^p$ be the column vector with 1 in the $i$th coordinate and 0's elsewhere.
        Let $\mathbf{e}_j^T \in \mathbb{R}^q$ be the row vector with 1 in the $j$th coordinate and 0's elsewhere.
        Then we define the following $p \times q$ matrices:
        \[
        \HOR{i} \coloneqq \begin{bmatrix}
            \mathbf{e}_i \cdots \mathbf{e}_i
        \end{bmatrix}, \qquad 
        \VER{j} \coloneqq \begin{bmatrix}
            \mathbf{e}^T_j \\
            \vdots \\
            \mathbf{e}^T_j
        \end{bmatrix}.
\]
For example, if $p=3$ and $q=4$, then we have
\begin{align*}
\NE{2 \times 1} &= \begin{bmatrix}
    0 & 0 & 0 & 1\\
    0 & 0 & 0 & 1\\
    0 & 0 & 0 & 0
\end{bmatrix}, &
\qquad 
\SW{2 \times 3} &= \begin{bmatrix}
    0 & 0 & 0 & 0\\
    1 & 1 & 1 & 0\\
    1 & 1 & 1 & 0
\end{bmatrix},\\[2ex]
\HOR{3} &= \begin{bmatrix}
    0 & 0 & 0 & 0\\
    0 & 0 & 0 & 0\\
    1 & 1 & 1 & 1
\end{bmatrix}, &
\qquad
\VER{3} &= \begin{bmatrix}
    0 & 0 & 1 & 0\\
    0 & 0 & 1 & 0\\
    0 & 0 & 1 & 0
\end{bmatrix}.
\end{align*}
When we treat symmetric matrices, we will write $n$ for the common value $p = q$.
We use the following shorthand:
\[
\NESW{a \times b} \coloneqq \NE{a \times b} \; + \, \SW{b \times a}, \qquad 
\HV{i} \coloneqq \HOR{i} \; + \VER{i}.
\]
For example, if $n=6$, then we have
\[
\NESW{3 \times 2} = \begin{bmatrix}
    0 & 0 & 0 & 0 & 1 & 1\\
    0 & 0 & 0 & 0 & 1 & 1\\
    0 & 0 & 0 & 0 & 1 & 1\\
    0 & 0 & 0 & 0 & 0 & 0\\
    1 & 1 & 1 & 0 & 0 & 0\\
    1 & 1 & 1 & 0 & 0 & 0
\end{bmatrix}
 \qquad \text{and} \qquad
 \HV{5} = \begin{bmatrix}
    0 & 0 & 0 & 0 & 1 & 0\\
    0 & 0 & 0 & 0 & 1 & 0\\
    0 & 0 & 0 & 0 & 1 & 0\\
    0 & 0 & 0 & 0 & 1 & 0\\
    1 & 1 & 1 & 1 & 2 & 1\\
    0 & 0 & 0 & 0 & 1 & 0
\end{bmatrix}.
\]
We will write a hat symbol to denote projection onto the probability simplex $\P_{p \times q}$ or $\P_{n \times n}$, as defined in~\eqref{Ppq}:
\begin{align*}
    \widehat{\NE{a \times b}} &\coloneqq \frac{1}{ab}\NE{a \times b}, &
    \widehat{\SW{a \times b}} &\coloneqq \frac{1}{ab}\SW{a \times b},\\[2ex]
    \widehat{\HOR{i}} &\coloneqq \frac{1}{q}\HOR{i}, &
    \widehat{\VER{j}} &\coloneqq \frac{1}{p}\VER{j},\\[2ex]
    \widehat{\NESW{a \times b}} &\coloneqq \frac{1}{2ab}\NESW{a \times b}, &
    \widehat{\HV{i}} &\coloneqq \frac{1}{2n}\HV{i}.
\end{align*}

\begin{rem}
    We have the following dictionary between our notation above and that of Rudolf and Woeginger~\cite{Rudolf}*{\S2}:
    \begin{align*}
        \NE{a \times b} & =  R^{(a, q+b-1)}, &
        \SW{a \times b} & = L^{(p-a+1, b)},\\
        \HOR{i} & = H^{(i)}, &
        \VER{j} & = V^{(j)},\\
        \NESW{a \times b} & = T^{(n-a+1, b)}, &
        \HV{i} & = S^{(i)}.
    \end{align*}
\end{rem}

\section{Main results: symmetric Monge polytopes}
\label{sec:symmetric results}

\subsection{Structure of the polytopes}

We begin by writing down the vertices of the hollow symmetric and symmetric Monge polytopes:

\begin{prop}
\label{prop:SM vertices}
    We have
    \begin{align*}
    V(\AM_n) &= \left\{ \widehat{\NESW{a \times b}} : a+b \leq n \right\}, \\[2ex]
    V(\SM_n) &= V(\AM_n) \cup \left\{ \widehat{\HV{i}} : 1 \leq i \leq n \right\}.
    \end{align*}
    Therefore $\#V(\AM_n) = \frac{n(n-1)}{2}$, and $\#V(\SM_n) = \frac{n(n+1)}{2}$.
\end{prop}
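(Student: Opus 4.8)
The plan is to read off the $V$-representation of each polytope from the classification of extremal rays of the corresponding Monge cone, using the standard cross-section principle for pointed cones. Let $\mathcal{C}$ be the cone of nonnegative symmetric Monge matrices and $\mathcal{C}_0 \subseteq \mathcal{C}$ its sub-cone of hollow matrices. Because every nonzero matrix in $\mathcal{C}$ has strictly positive entry-sum, the affine hyperplane $\{A : \sum_{i,j} a_{ij} = 1\}$ meets each ray of $\mathcal{C}$ in exactly one point; intersecting it with $\mathcal{C}$ (resp.\ $\mathcal{C}_0$) recovers precisely $\SM_n$ (resp.\ $\AM_n$). Under such a cross-section, the vertices of the resulting polytope are exactly the normalized generators of the extremal rays of the cone: an extremal ray with generator $g$ contributes the single vertex $g/(\sum_{i,j} g_{ij})$. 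Thus it suffices to identify the extremal rays and normalize.

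First I would invoke the Rudolf--Woeginger classification \cite{Rudolf} of the extremal rays of the symmetric Monge cone, which under the notational dictionary recorded above are generated exactly by the matrices $\NESW{a \times b}$ with $a + b \leq n$ together with the matrices $\HV{i}$ for $1 \leq i \leq n$. Counting entries gives entry-sum $2ab$ for $\NESW{a \times b}$ (two $a \times b$ blocks of $1$'s) and $2n$ for $\HV{i}$ (a full row and column meeting in a single $2$), so normalizing reproduces exactly the hatted matrices $\widehat{\NESW{a \times b}}$ and $\widehat{\HV{i}}$. This already gives the stated $V(\SM_n)$.

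For the hollow polytope, the key observation is that $\mathcal{C}_0$ is a \emph{face} of $\mathcal{C}$. Indeed, each inequality $c_{ii} \geq 0$ is valid on $\mathcal{C}$, so each hyperplane $\{c_{ii} = 0\}$ cuts out a face, and the intersection of these faces over all $i$ is again a face, equal to $\mathcal{C}_0$. Since the extremal rays of a face of a polyhedral cone are precisely those extremal rays of the ambient cone that lie in the face, the generators of $\mathcal{C}_0$ are exactly the generators above having vanishing diagonal. Every $\NESW{a \times b}$ with $a + b \leq n$ is hollow --- indeed $a + b \leq n$ is precisely the condition keeping both off-diagonal blocks clear of the main diagonal --- whereas each $\HV{i}$ carries a $2$ in position $(i,i)$ and is excluded. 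Hence $V(\AM_n) = \{\widehat{\NESW{a \times b}} : a + b \leq n\}$.

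It remains to count. The pairs $(a,b)$ with $a, b \geq 1$ and $a + b \leq n$ number $\binom{n}{2} = \frac{n(n-1)}{2}$, giving $\#V(\AM_n)$; adjoining the $n$ matrices $\HV{i}$ yields $\#V(\SM_n) = \frac{n(n-1)}{2} + n = \frac{n(n+1)}{2}$. The main obstacle lies entirely in the extremal-ray classification that we have imported from \cite{Rudolf}: proving it from scratch would require showing both that every nonnegative symmetric Monge matrix is a nonnegative combination of the listed generators and that no generator is redundant. Granting that classification, the face argument disposes of the hollow case at essentially no extra cost, and the enumeration is immediate.
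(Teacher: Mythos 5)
Your argument is correct, but it takes a genuinely different route from the paper's. You import the Rudolf--Woeginger classification of the extremal rays of the symmetric Monge (Supnick) cone and apply the standard cross-section principle, disposing of the hollow case by noting that the hollow cone is a face cut out by the valid inequalities $c_{ii} \geq 0$, so its extremal rays are exactly the ambient extremal rays with zero diagonal (and indeed $a+b \leq n$ is precisely the hollowness condition for $\NESW{a \times b}$, while each $\HV{i}$ is excluded by its $2$ in position $(i,i)$). This is essentially the strategy the paper reserves for the generic polytope in Proposition~\ref{prop:Mpq vertices}, which cites \cite{Rudolf}*{Lemma 2.5}. For the symmetric case, however, the paper's proof is self-contained and proves something stronger: every $C \in \AM_n$ (resp.\ $\SM_n$) is a \emph{unique} convex combination of the claimed vertices, obtained by peeling off coefficients anti-diagonal by anti-diagonal --- for $a+b=n$ the coefficient $\lambda_{ab}$ is $2ab$ times the superdiagonal entry $c_{a,n-b+1}$, one subtracts and recurses, and for $\SM_n$ one first strips the $\widehat{\HV{i}}$ using the diagonal entries. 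What the paper's approach buys is that this uniqueness immediately yields that both polytopes are simplices, and its discrete analogue is exactly what is invoked later in the proof of Theorem~\ref{thm:prob HM} to write down the generating functions. What your approach buys is brevity and uniformity with the generic case, at the cost of resting entirely on the external extremal-ray classification, which you correctly flag as the one imported ingredient; your face argument for passing from the symmetric to the hollow cone is a clean observation not made explicit in the paper. The count at the end is right in both treatments.
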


\begin{proof}
    It is clear that the matrices in the (claimed) vertex sets all belong to $\AM_n$ (resp., $\SM_n$).
    Therefore any convex combination of these matrices also lies in $\AM_n$ (resp., $\SM_n$).
    We will show that these matrices are in fact the vertices of the respective polytope, by proving something stronger: namely, each element of the polytope can be written as a \textit{unique} convex combination of these matrices.
    
    Let $\{\lambda_{ab}\}_{a+b \leq n}$ be a set of nonnegative real numbers summing to 1, and put
    \begin{equation}
    \label{C sum lambdas}
        C = \sum_{a+b \leq n} \lambda_{ab} \, \widehat{\NESW{a \times b}} \in \AM_n.
    \end{equation}
    Then we have
    \[
    c_{i,n-j+1} = c_{n-j+1,i} = \sum_{\substack{a \geq i,\\b \geq j\phantom{,}}} \frac{\lambda_{ab}}{2ab}.
    \]
    This suggests the following (unique) way to recover the coefficients $\lambda_{ab}$ given any $C \in \AM_n$.
    For each pair $a + b = n$, the coefficient $\lambda_{ab}$ is just $2ab$ times the superdiagonal entry $c_{a,n-b+1}$.
    Then one obtains a new symmetric matrix
    \[
        C' = C - \sum_{a+b=n}\lambda_{ab} \, \widehat{\NESW{a \times b}}
    \]
    with zeros on its superdiagonal.
    For each $a+b=n-1$, one sees that $\lambda_{ab}$ is $2ab$ times the entry $c'_{a,n-b+1}$.
    Repeating this process gives a unique way to obtain all of the coefficients $\lambda_{ab}$.

    For $\SM_n$, the same argument holds if we begin with the diagonal elements of $C \in \SM_n$.
    In particular, include the additional coefficients $\lambda_i$ for each $1 \leq i \leq n$, corresponding to the matrices $\widehat{\HV{i}}$.
    Each $\lambda_i$ is recovered by multiplying the entry $c_{ii}$ by $2n$.
    Subtracting from $C$ the matrices $\lambda_i \, \widehat{\HV{i}}$ yields a hollow symmetric matrix, from which the coefficients $\lambda_{ab}$ can be recovered as described in the $\AM_n$ case above.
\end{proof}

\begin{prop}
    Both $\AM_n$ and $\SM_n$ are simplices, of dimension $\frac{n(n-1)}{2}-1$ and $\frac{n(n+1)}{2}-1$, respectively.
\end{prop}

\begin{proof}
    In both cases, it is immediate from Proposition~\ref{prop:SM vertices} that the number of vertices is one more than the dimension of the polytope.
    Recall that the dimension was obtained by taking the number of degrees of freedom in a symmetric (or hollow symmetric) matrix, and subtracting one because the polytope also satisfies the defining equality of the affine hyperplane containing $\mathcal{P}_{n \times n}$.
\end{proof}

\subsection{Normalized volume of $\AM_n$ and $\SM_n$}

We now turn to the normalized volume of the symmetric Monge polytopes.
Recall from~\eqref{HP and SP} the subsimplices $\mathcal{HP}_n$ and $\mathcal{SP}_n$, consisting of (hollow) symmetric $n \times n$ matrices with nonnegative entries summing to 1.
Let ${\rm Vol}(\mathcal{P})$ denote the $d$-dimensional volume of a $d$-dimensional polytope $\mathcal{P}$.
Recall that $\dim \AM_n = \dim{\mathcal{HP}_n} = \frac{n(n-1)}{2} - 1$, and $\dim \SM_n = \dim{\mathcal{SP}_n} = \frac{n(n+1)}{2} - 1$.
Therefore the following definition captures the proportion of symmetric matrices (where the sum of entries is fixed) which have the Monge property:

\begin{definition}
\label{def:norm vol}
    The \emph{normalized volume} of the symmetric (resp., hollow symmetric) Monge polytope is the ratio
    \[
    \widehat{{\rm Vol}}(\AM_n) \coloneqq \frac{{\rm Vol}(\AM_n)}{{\rm Vol}(\mathcal{HP}_{n})}, \qquad \widehat{{\rm Vol}}(\SM_n) \coloneqq \frac{{\rm Vol}(\SM_n)}{{\rm Vol}(\mathcal{SP}_{n})}.
    \]
\end{definition}

We state the theorem below in terms of the \emph{superfactorial}
\[
{\rm sf}(n) \coloneqq \prod_{i=1}^{n} i!,
\]
i.e., the product of the first $n$ factorials.
The name seems to have been introduced by Sloane and Plouffe~\cite{Sloane}; see entry \href{https://oeis.org/A000178}{A000178} in the OEIS.
Our formulas below involve the squares of superfactorials, which are found in OEIS entry 
\href{https://oeis.org/A055209}{A055209}.

\begin{theorem}
\label{thm:prob HM}
    The normalized volumes of the (hollow) symmetric Monge polytopes are given by
    \[
    \widehat{{\rm Vol}}(\AM_n) = 
    \frac{1}{{\rm sf}(n-1)^2}, \qquad \widehat{{\rm Vol}}(\SM_n) = 
    \frac{1}{{\rm sf}(n-1)^2 \cdot n^n}.
    \]
\end{theorem}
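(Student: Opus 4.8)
The plan is to exploit the fact that $\AM_n$ and $\mathcal{HP}_n$ are simplices of the same dimension lying in the same affine hyperplane, so that $\widehat{{\rm Vol}}(\AM_n)$ is simply the absolute value of the determinant of the \emph{transition matrix} writing the vertices of $\AM_n$ as convex combinations of the vertices of $\mathcal{HP}_n$. Concretely, I would first record the elementary lemma that if $u_0,\dots,u_d$ are the vertices of a simplex $S$ and $w_k=\sum_\ell A_{\ell k}u_\ell$ with $\sum_\ell A_{\ell k}=1$ are the vertices of a simplex $T$ in the same affine span, then ${\rm Vol}(T)/{\rm Vol}(S)=|\det A|$. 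This follows by subtracting the $0$th column from every other column and then adding all rows to the $0$th row: the column sums being $1$ force the reduced matrix into the block form $\begin{bmatrix}1&0\\ *&G\end{bmatrix}$, where $G$ is exactly the matrix relating the edge vectors $w_\ell-w_0$ to $u_\ell-u_0$; hence $\det A=\det G$, and $|\det G|$ is the desired ratio.

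Next I would compute this transition matrix explicitly. The vertices of $\mathcal{HP}_n$ are the symmetric matrices $P_{ij}:=\tfrac12(\mathbf e_i\mathbf e_j^T+\mathbf e_j\mathbf e_i^T)$ for $i<j$, and by Proposition~\ref{prop:SM vertices} the vertices of $\AM_n$ are the $\widehat{\NESW{a\times b}}$ with $a+b\le n$. Reading off the northeast block gives
\[
\widehat{\NESW{a\times b}}=\frac{1}{ab}\sum_{\substack{1\le i\le a\\ n-b+1\le j\le n}}P_{ij},
\]
so the transition matrix $A$ has rows indexed by pairs $i<j$, columns indexed by pairs $(a,b)$ with $a+b\le n$, and entry $A_{(i,j),(a,b)}=\tfrac{1}{ab}$ precisely when $i\le a$ and $j\ge n-b+1$ (and $0$ otherwise). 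I would then factor $A=M\Lambda$, where $\Lambda=\mathrm{diag}\big(\tfrac{1}{ab}\big)$ and $M$ is the $0$--$1$ matrix recording these incidences. The determinant of $\Lambda$ is $\prod_{a+b\le n}\tfrac{1}{ab}=\big(\prod_{a+b\le n}\tfrac1a\big)\big(\prod_{a+b\le n}\tfrac1b\big)$; since $a$ occurs with multiplicity $n-a$, each of the two factors equals $1/\prod_{a=1}^{n-1}a^{\,n-a}=1/{\rm sf}(n-1)$, so $\det\Lambda=1/{\rm sf}(n-1)^2$.

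It remains to show $|\det M|=1$, which is the step I expect to require the key idea. Introducing the anti-diagonal coordinate $j'=n-j+1$ turns the incidence condition into $i\le a$ and $j'\le b$, while turning the constraint $i<j$ into $i+j'\le n$; thus both the rows and the columns of $M$ are now indexed by the \emph{same} poset $\{(x,y):x,y\ge 1,\ x+y\le n\}$ under the componentwise order, and $M$ becomes its zeta matrix, with a $1$ in position $\big((i,j'),(a,b)\big)$ exactly when $(i,j')\le(a,b)$. Ordering the index set by any linear extension makes $M$ unitriangular, so $\det M=\pm1$, and therefore $\widehat{{\rm Vol}}(\AM_n)=|\det A|=1/{\rm sf}(n-1)^2$.

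Finally, for $\SM_n$ I would run the same argument with the enlarged vertex sets: $\mathcal{SP}_n$ gains the diagonal vertices $\mathbf e_i\mathbf e_i^T$, and $\SM_n$ gains the vertices $\widehat{\HV{i}}=\tfrac1n\,\mathbf e_i\mathbf e_i^T+\tfrac1n\sum_{j\ne i}P_{ij}$. Ordering the vertices as (off-diagonal, then diagonal), the transition matrix becomes block upper triangular, $\begin{bmatrix}A & B\\ 0 & \tfrac1n I_n\end{bmatrix}$: the $\widehat{\NESW{a\times b}}$ are hollow (giving the zero block), while each $\widehat{\HV{i}}$ contributes $\tfrac1n$ in the diagonal row $i$. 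Hence the determinant is $|\det A|\cdot n^{-n}=1/\big({\rm sf}(n-1)^2\,n^n\big)$, as claimed. The only genuine obstacle is spotting the zeta-matrix structure after the anti-diagonal reindexing; once $A$ is in the form $M\Lambda$, the remaining ingredients—the superfactorial identity $\prod_{a=1}^{n-1}a^{\,n-a}={\rm sf}(n-1)$, the unimodularity of poset zeta matrices, and the block-triangular determinant—are routine.
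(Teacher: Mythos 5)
Your argument is correct, and it takes a genuinely different route from the paper. The paper works discretely: it uses the unique decomposition of integer hollow symmetric Monge matrices into $\mathbb{N}$-combinations of the generators $\NESW{a \times b}$ (and $\HV{i}$ in the non-hollow case) to write the rational generating function $\prod_{a+b\le n}(1-t^{ab})^{-1}$, extracts the leading asymptotics of its coefficients, and compares with the count $\binom{k+m-1}{m-1}$ of all lattice points of a given half-sum; the normalized volume then appears as a limit of discrete densities. You instead exploit directly that $\AM_n$ and $\mathcal{HP}_n$ (resp.\ $\SM_n$ and $\mathcal{SP}_n$) are simplices with the same affine span, so the volume ratio is $|\det A|$ for the vertex transition matrix, which you evaluate via the factorization $A = M\Lambda$: the diagonal part contributes $\prod_{a+b\le n}\frac{1}{ab} = \mathrm{sf}(n-1)^{-2}$ (the same product the paper extracts from $D^{(m)}(1)$), and the anti-diagonal reindexing $j' = n-j+1$ exhibits $M$ as the zeta matrix of the staircase poset $\{(x,y): x,y\ge 1,\ x+y\le n\}$, hence unimodular; the block-triangular extension handles $\SM_n$. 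Both proofs ultimately rest on the same combinatorial input, namely the freeness statement in Proposition~\ref{prop:SM vertices}, but your determinant computation avoids asymptotics entirely and is arguably more self-contained for the simplex case, while the paper's generating-function machinery is the one that survives when the polytope is \emph{not} a simplex (it is reused verbatim for $\M_{2\times p}$ in Theorem~\ref{thm:volume 2-row}, where no transition-matrix argument is available). One small point worth making explicit if you write this up: the equality of affine spans needed for your volume lemma follows from $\AM_n \subseteq \mathcal{HP}_n$ together with the fact that both simplices have the same dimension $\frac{n(n-1)}{2}-1$.
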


\begin{proof}
    We will give full details in the proof for $\AM_n$, and then afterwards give the very few modifications required for $\SM_n$.
    In the spirit of analytic combinatorics, the proof proceeds from the discrete setting (i.e., integer matrices) to the desired normalized volume, via asymptotics of generating functions.
    
    Let ${\rm H}_n(\mathbb{N})$ be the set of all hollow symmetric $n \times n$ matrices with entries in $\mathbb{N} \coloneqq \mathbb{Z}_{\geq 0}$.
    Denote by ${\rm HM}_n(\mathbb{N}) \subset {\rm H}_n(\mathbb{N})$ the subset consisting of Monge matrices.
    Let $h(-)$ denote half the sum of the entries in a matrix, and partition our sets of matrices accordingly:
    \begin{align*}
        {\rm H}^k_n(\mathbb{N}) & \coloneqq \Big\{X \in {\rm H}_n(\mathbb{N}) : h(X) = k \Big\}, \\
        {\rm HM}^k_n(\mathbb{N}) & \coloneqq \Big\{C \in {\rm HM}_n(\mathbb{N}) : h(C) = k \Big\}.
    \end{align*}
    We observe that
    \begin{equation}
    \label{prob is limit}
        \widehat{{\rm Vol}}(\AM_n) = \lim_{k \rightarrow \infty} \frac{\#{\rm HM}_n^k(\mathbb{N})}{\# {\rm H}_n^k(\mathbb{N})}.
    \end{equation}
    This is because if $X \in {\rm H}_n^k(\mathbb{N})$, then the matrix $\frac{1}{2k} \cdot X \in \mathcal{HP}_n$ has rational entries.
    Thus for each $k$, the ratio on the right-hand side of~\eqref{prob is limit} is the proportion of Monge matrices among certain rational points in $\mathcal{HP}_n$.
    Then by continuity and by density of the rational matrices, the limit in~\eqref{prob is limit} exists and gives the desired proportion of volumes defined in Definition~\ref{def:norm vol}.
    
    Let $m \coloneqq \frac{n(n-1)}{2}$ denote the number of degrees of freedom in a matrix in ${\rm H}_n(\mathbb{N})$, corresponding to the strictly upper-triangular entries.
    Observe that $\#{\rm H}_n^k(\mathbb{N})$ equals the number of ways to fill these $m$ matrix positions with a total of $k$ units, which is the number of weak compositions of $k$ into $m$ parts.
    This number is well known to be the binomial coefficient $\binom{k+m-1}{m-1}$.
    By~\eqref{composition asymptotic} we thus have the following asymptotic in $k$:
    \begin{equation}
        \label{size Hn k}
        \#{\rm H}_n^k(\mathbb{N}) = \binom{k + m -1}{m-1} \sim \frac{k^{m-1}}{(m-1)!}.
    \end{equation}

    Next we will find an asymptotic for $\#{\rm HM}_n^k(\mathbb{N})$.
    The discrete analogue of the proof of Proposition~\ref{prop:SM vertices} implies that for each $C \in {\rm HM}_n(\mathbb{N})$, there is a unique tuple $\lambda \coloneqq \{ \lambda_{ab} \}_{a+b\leq n}$ in $\mathbb{N}^m$, such that
    \[
    C = \sum_{a+b\leq n} \lambda_{ab} \, \NESW{a \times b};
    \]
     conversely, each $\lambda \in \mathbb{N}^m$ determines a unique matrix in ${\rm HM}_n(\mathbb{N})$ in this way.
     Since $h(\NESW{a \times b}) = ab$, we have
    \[
    h(C) = \sum_{a+b \leq n} \lambda_{ab} \cdot ab.
    \]
    Therefore we have the following generating function for ${\rm HM}_n(\mathbb{N})$ with respect to half-sum:
    \begin{align*}
        f(t) &\coloneqq \prod_{a+b\leq n} \frac{1}{1-t^{ab}} \\
        &= \sum_{\lambda \in \mathbb{N}^m} \left(\prod_{a+b\leq n} (t^{ab})^{\lambda_{ab}}\right)\\
        &=        
        \sum_{C \in {\rm HM}_n(\mathbb{N})} \hspace{-10pt}t^{h(C)}\\
        &= \sum_{k = 0}^\infty \underbrace{\# {\rm HM}^k_n(\mathbb{N)}}_{f_k} \cdot \; t^k.
    \end{align*}
It remains to determine the asymptotic behavior of the coefficients $f_k$.
We first observe that $f(t)$ satisfies the hypotheses of Corollary~\ref{cor:asymptotic special}, where $N(t) = 1$ and $D(t) = \prod_{a+b \leq n} (1-t^{ab})$.
By that corollary, we have
\begin{equation}
\label{HM n k asymptotic}
    \#{\rm HM}_n^k(\mathbb{N}) = f_k = \frac{1}{(m-1)!\prod_{a+b \leq n} ab} k^{m-1} + O(k^{m-2}).
\end{equation}
The product in the denominator can be rewritten as a superfactorial:
\[
    \prod_{a+b \leq n} ab = \prod_{a,b=1}^{n-1} a!b! = \prod_{i=1}^{n-1} (i!)^2 = {\rm sf}(n-1)^2.
\]
Substituting this in~\eqref{HM n k asymptotic}, we have the following asymptotic in $k$:
\begin{equation}
    \label{HM final}
    \#{\rm HM}_n^k(\mathbb{N}) \sim \frac{k^{m-1}}{(m-1)! \: {\rm sf}(n-1)^2}.
\end{equation}
Finally, we evaluate the limit in~\eqref{prob is limit} via the asymptotics of its numerator~\eqref{HM final} and denominator~\eqref{size Hn k}, which completes the proof for $\AM_n$:
\[
\widehat{{\rm Vol}}(\AM_n) = \frac{k^{m-1}}{(m-1)! \: {\rm sf}(n-1)^2} \cdot \frac{(m-1)!}{k^{m-1}} = \frac{1}{{\rm sf}(n-1)^2}.
\]

The proof for $\SM_n$ is nearly identical, with the following adjustments.
    Let ${\rm S}_n(\mathbb{N})$ be the set of all symmetric $n \times n$ matrices with entries in $\mathbb{N}$, and with diagonal entries in $2\mathbb{N}$.
    Let ${\rm SM}_n(\mathbb{N}) \subset {\rm H}_n(\mathbb{N})$ be the subset consisting of Monge matrices.
    Again let $h(-)$ denote half the sum of the entries of a matrix, and set
    \begin{align*}
        {\rm S}^k_n(\mathbb{N}) & \coloneqq \Big\{X \in {\rm S}_n(\mathbb{N}) : h(X) = k \Big\}, \\
        {\rm SM}^k_n(\mathbb{N}) & \coloneqq \Big\{C \in {\rm SM}_n(\mathbb{N}) : h(C) = k \Big\}.
    \end{align*}
    By the same reasoning as before, the normalized volume can be computed via
    \begin{equation}
    \label{SM prob is limit}
        \widehat{{\rm Vol}}(\SM_n) = \lim_{k \rightarrow \infty} \frac{\#{\rm SM}_n^k(\mathbb{N})}{\# {\rm S}_n^k(\mathbb{N})}.
    \end{equation}
    Each matrix $X \in {\rm S}_n(\mathbb{N})$ can be written as a unique $\mathbb{N}$-combination of the matrices $E_{ij}+E_{ji}$, for all $1 \leq i \leq j \leq n$.
    There are $m \coloneqq n(n+1)/2$ such matrices, and each contributes $1$ to $h(X)$.
    Therefore we again have
    \begin{equation}
        \label{S asymptotic}
    \#{\rm S}_n^k(\mathbb{N}) = \binom{k + m -1}{m-1} \sim \frac{k^{m+1}}{(m-1)!}.
    \end{equation}
    Next, if $C \in {\rm SM}_n(\mathbb{N})$, then there are unique nonnegative integers $\lambda_{ab}$ and $\lambda_i$, for $a+b\leq n$ and for $1 \leq i \leq n$, such that
    \[
    C = \sum_{a+b\leq n} \lambda_{ab} \NESW{a \times b} + \sum_{i=1}^n \lambda_i \HV{i}.
    \]
Since each copy of $\NESW{a \times b}$ contributes $ab$ to $h(C)$, and each copy of $\HV{i}$ contributes $n$, we have
\[
    h(C) = \sum_{a+b \leq n} \lambda_{ab} \cdot ab + n\sum_{i=1}^n \lambda_i.
\]
Therefore we have the generating function
\[
    f(t) \coloneqq \prod_{a+b\leq n} \frac{1}{1-t^{ab}} \cdot \frac{1}{(1-t^{n})^n}  = \sum_{k = 0}^\infty \underbrace{\# {\rm SM}^k_n(\mathbb{N)}}_{f_k} \cdot \; t^k.
\]
The rest of the proof is identical to the $\AM_n$ case above.
\end{proof}

\subsection{The metric face of the hollow symmetric Monge polytope}
\label{sub:metric face}

In general, any cost matrix $C \in \AM_n$ (or scalar multiple thereof) induces a premetric $d: [n] \times [n] \longrightarrow \mathbb{R}_{\geq 0}$, given by $d(i,j) = c_{ij}$.
(By ``premetric,'' we mean a metric space without the axiom $i \neq j \Longrightarrow d(i,j) \neq 0$ and without the triangle inequality.)
Each vertex of $\AM_n$ has the following straightforward interpretation in the context of the TP (where the source sites $1, \ldots, n$ are identified with the target sites, since we are considering symmetric cost matrices) or the TSP (where we consider the sites $1, \ldots, n$ to be cities): any scalar multiple $\lambda \NESW{a \times b}$ imposes the cost $\lambda$ for transportation between the first $a$ sites and the last $b$ sites, and cost $0$ otherwise.

\begin{definition}
    The \emph{metric face} of the polytope $\AM_n$ is the convex hull of the vertices $\NESW{a \times b}$ for all $a + b = n$.
\end{definition}

The $n$ vertices determining the metric face of $\AM_n$ are maximal, in the sense that their \emph{support} (i.e., the set of matrix coordinates with nonzero entries) is not contained in the support of any other vertex.
The name ``metric face'' is justified by the following proposition:

\begin{prop}
    A cost matrix in $\AM_n$ induces a true metric if and only if it lies in the interior of the metric face.
\end{prop}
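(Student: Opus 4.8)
The plan is to show that the metric face is exactly the set of \emph{line (path) metrics} on the ordered points $1,\dots,n$, and that its relative interior consists of those with strictly positive edge lengths. First I would identify the defining vertices: for $a+b=n$, a direct inspection shows that $\NESW{a\times b}$ is the cut semimetric separating $\{1,\dots,a\}$ from $\{a+1,\dots,n\}$, i.e. its $(i,j)$-entry (for $i<j$) is $1$ if $i\le a<j$ and $0$ otherwise. Hence every point of the metric face is, up to the per-vertex normalization in $\widehat{\NESW{a\times b}}$, a nonnegative combination $C=\sum_{a+b=n}w_a\,\NESW{a\times b}$ with entries $c_{ij}=\sum_{a=i}^{j-1}w_a$ for $i<j$; this is the path metric having edge length $w_a$ between $a$ and $a+1$. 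Such a matrix embeds isometrically into $\mathbb{R}$ (set $x_i=\sum_{a<i}w_a$), so it obeys every triangle inequality. Since $\AM_n$ is a simplex, these vertices do span a genuine face, whose relative interior corresponds to all $w_a>0$; then $c_{i,i+1}=w_i>0$ forces $c_{ij}>0$ for all $i\neq j$, so every interior point is a true metric. This settles one direction.

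For the converse I would take any $C\in\AM_n$ inducing a true metric and use its unique expansion $C=\sum_{a+b\le n}\mu_{ab}\,\NESW{a\times b}$ with $\mu_{ab}\ge 0$, guaranteed by Proposition~\ref{prop:SM vertices}, together with the resulting partial-sum formula $c_{ij}=\sum_{a\ge i,\;b\ge n-j+1}\mu_{ab}$ for $i<j$. The heart of the argument is to expand $c_{im}+c_{mj}-c_{ij}$ for a triple $i<m<j$ as a signed sum over the $(a,b)$-grid. The coefficient $+1$ occurs only on the region $\{a\ge m,\,b\ge n-m+1\}$, which is \emph{empty} because there $a+b\ge n+1$; the only surviving contribution is $-\sum_{a=i}^{m-1}\sum_{b=n-j+1}^{n-m}\mu_{ab}$. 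Thus every Monge matrix automatically satisfies the \emph{reverse} inequality $c_{im}+c_{mj}\le c_{ij}$, and the genuine triangle inequality $c_{ij}\le c_{im}+c_{mj}$ can hold only if $\sum_{a=i}^{m-1}\sum_{b=n-j+1}^{n-m}\mu_{ab}=0$, whence by nonnegativity each $\mu_{ab}$ in that box vanishes.

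Applying this to the triples $(i,m,j)=(a,a+1,n)$ for $a=1,\dots,n-2$ forces $\mu_{ab}=0$ for all $1\le b\le n-a-1$, i.e. every coefficient with $a+b\le n-1$; only the metric-face coefficients $\mu_{a,n-a}$ survive, so $C$ is a line metric. The positivity axiom $c_{a,a+1}=\mu_{a,n-a}>0$ then forces each of these to be strictly positive, placing $C$ in the relative interior of the metric face and completing the equivalence. I expect the main obstacle to be the careful bookkeeping of the overlapping partial-sum regions in the expansion of $c_{im}+c_{mj}-c_{ij}$; the decisive simplification is the observation that the would-be positive region is empty precisely because of the constraint $a+b\le n$ built into the vertex set of $\AM_n$.
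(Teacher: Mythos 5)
Your proposal is correct and follows essentially the same route as the paper: both directions rest on the unique vertex decomposition from Proposition~\ref{prop:SM vertices}, identifying the metric face with the path (line) metrics so that the triangle inequality holds there, and obtaining the converse by showing that any coefficient $\mu_{ab}$ with $a+b<n$ forces a strict violation of some triangle inequality. Your signed-region computation of $c_{im}+c_{mj}-c_{ij}$ is simply a more explicit rendering of the paper's observation that on the metric face $c_{ij}=c_{i,i+1}+c_{i+1,j}$ is already an equality, with the pleasant byproduct that every matrix in $\AM_n$ satisfies the reverse triangle inequality.
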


\begin{proof}
    Let $C \in \AM_n$, written as a unique sum~\eqref{C sum lambdas} of vertices; we first show that the triangle inequality holds if and only if $C$ lies on the metric face.
    If $C$ lies on the metric face, then it is the convex combination of vertices of the form $\NESW{a \times (n-a)}$.
    At such a vertex, each of the first $a$ sites is the same positive distance away from each of the remaining $n-a$ sites, while all other distances are zero.
    Hence it is easy to see that the triangle inequality is satisfied at such a vertex, and therefore at any convex combination of such vertices.
    Thus if $C$ is on the metric face, then $C$ has the triangle inequality.

    To prove the converse, note that if $C$ is on the metric face, then for $j-i>1$ we have
    \[
    c_{ij} = \sum_{i \leq k < j} c_{k,k+1} = c_{i,i+1} + \sum_{i < k < j} c_{k,k+1} = c_{i,i+1} + c_{i+1,j}.
    \]
    Hence it is impossible to increase $c_{ij}$ without violating the triangle inequality, and thus $\lambda_{i,n-j+1} = 0$.
    Hence if the triangle inequality holds, then $\lambda_{ab} = 0$ for all $a+b < n$, so $C$ lies on the metric face of $\AM_n$.

    Finally, it is clear that the axiom $i \neq j \Longrightarrow c_{ij} \neq 0$ holds if and only if $\lambda_{ab} > 0$ for all $a+b = n$, which requires $C$ to lie in the \emph{interior} of the metric face.
\end{proof}

\section{Main results: the generic Monge polytope}
\label{sec:generic results}

Unlike the symmetric Monge polytopes in the previous section, the generic Monge polytope $\M_{p \times q}$ is not a simplex, and its structure is somewhat complicated.

\subsection{Vertices, facets, and edges of $\M_{p \times q}$}

We begin by explicitly describing and enumerating the vertices, facets, and edges of the Monge polytope.

\begin{prop}
\label{prop:Mpq vertices}
    The vertex set of $\M_{p \times q}$ is
    \begin{align*}
    V(\M_{p \times q}) = \qquad &\left\{ \widehat{\NE{a \times b}} : 1 \leq a < p, \: 1 \leq b < q \right\} \\
    \cup \;\; &\left\{ \widehat{\SW{a \times b}} : 1 \leq a < p, \: 1 \leq b < q \right\}\\
    \cup \;\; &\left\{ \widehat{\HOR{i}} : 1 \leq i \leq p \right\} \\
    \cup \;\; &\left\{ \widehat{\VER{j}} : 1 \leq j \leq q \right\}.
    \end{align*}
    Therefore $\#V(\M_{p \times q}) = 2(p-1)(q-1) + p + q$.
\end{prop}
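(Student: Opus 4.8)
The plan is to realize $\M_{p \times q}$ as a transversal slice of the pointed cone of nonnegative Monge matrices, so that its vertices are exactly the normalizations of the extremal rays of that cone. Concretely I would (i) check that each listed matrix lies in $\M_{p\times q}$, (ii) check that each is genuinely a vertex, and (iii) show there are no others; the enumeration then follows by counting the four families and verifying pairwise distinctness. Throughout I would assume $p,q\geq 2$, since if $p=1$ or $q=1$ the inequalities~\eqref{Monge defining inequality} are vacuous and $\M_{p\times q}$ is just the probability simplex (in which case $\widehat{\HOR{1}}$, being the barycenter, is not a vertex).

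Membership is routine: each block is nonnegative, the hats normalize the entry-sum to $1$, and the Monge property~\eqref{Monge defining inequality} is checked by a short case analysis on whether the chosen rows and columns meet the support block. For vertex-ness I would use the criterion that $v$ is a vertex iff the defining inequalities tight at $v$, together with the sum-one hyperplane, pin $v$ down uniquely (equivalently, have rank $pq$ in $\mathbb{R}^{pq}$). For $\widehat{\NE{a \times b}}$ the nonnegativity constraints are tight at every entry off the block; moreover, since $a<p$ an all-zero row lies beneath the block and since $b<q$ an all-zero column lies to its left, so certain Monge inequalities become tight and, read as equalities, assert that horizontally and vertically adjacent block entries are equal. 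These equalities force the block to be constant, and the sum-one condition then determines $\widehat{\NE{a \times b}}$ uniquely; $\widehat{\SW{a \times b}}$ is handled identically. For $\widehat{\HOR{i}}$ and $\widehat{\VER{j}}$ the support is a single row or column, nonnegativity pins every other row (resp.\ column) to zero, and (since $p,q\geq 2$) tight Monge inequalities force the surviving row or column to be constant, again giving a vertex.

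The real content is step (iii), that the list is exhaustive. The cleanest route is to invoke the Rudolf--Woeginger classification~\cite{Rudolf}: via the dictionary of the Remark, the blocks $\NE{a \times b},\SW{a \times b},\HOR{i},\VER{j}$ span precisely the extremal rays of the nonnegative Monge cone. Since the functional ``sum of entries'' is strictly positive on every such ray and the cone is pointed, slicing by the sum-one hyperplane yields a polytope whose vertices are exactly the normalized ray generators, namely the hatted matrices. If instead one wants a self-contained argument, one must show directly that every nonnegative Monge matrix $C$ is a nonnegative combination of the blocks. Here I would use that $C$ is determined by its mixed second differences $d_{ij} \coloneqq c_{i,j+1}+c_{i+1,j}-c_{ij}-c_{i+1,j+1}$ (all $\geq 0$ by~\eqref{Monge defining inequality}) together with boundary data, that each $\NE{a \times b}$ (resp.\ $\SW{a \times b}$) contributes exactly one unit of second difference at a single interior position, and that the remaining ``additive'' part $c_{ij}=u_i+v_j$ is a nonnegative combination of the $\HOR{i}$ and $\VER{j}$ precisely when it is entrywise nonnegative (via a shift $u_i\mapsto u_i+t$, $v_j\mapsto v_j-t$, feasible because $\min_i u_i+\min_j v_j=\min_{ij}c_{ij}\geq 0$).

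This last point is where the difficulty concentrates, and it is the step I expect to be the main obstacle. Each interior second difference $d_{ij}$ can be ``paid for'' by either an NE block or an SW block, and a naive choice fails: allocating every $d_{ij}$ to NE blocks leaves an additive remainder $c_{i1}+c_{pj}-c_{p1}$ that is negative whenever $C$ has a genuine SW-type corner (one checks this already for $C=\SW{a \times b}$ itself, at position $(1,b+1)$). Thus the crux is to split each $d_{ij}$ between NE and SW blocks so that the leftover additive matrix stays nonnegative; this is exactly the combinatorial heart of the Rudolf--Woeginger theorem, and the non-uniqueness of the split is precisely why $\M_{p\times q}$ fails to be a simplex. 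Finally, for the count, the four families contribute $(p-1)(q-1)$, $(p-1)(q-1)$, $p$, and $q$ matrices; pairwise distinctness is immediate from supports (every block $\NE{a \times b}$ has a $1$ in position $(1,q)$ where every $\SW{a \times b}$ has a $0$, and no proper block has a complete nonzero row or column as $\HOR{i}$ and $\VER{j}$ do), giving the stated total $2(p-1)(q-1)+p+q$.
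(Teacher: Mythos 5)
Your proposal is correct and takes essentially the same route as the paper: the paper's proof likewise rests entirely on citing the Rudolf--Woeginger classification of the extremal rays of the nonnegative Monge cone (\cite{Rudolf}, Lemma~2.5) and then slicing by the sum-one hyperplane. Your additional sketch of a self-contained decomposition via second differences, and your identification of the NE/SW splitting as the genuine difficulty, goes beyond what the paper does but is not needed for the argument as given.
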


\begin{proof}

A main result of~\cite{Rudolf}*{Lemma 2.5} states that the nonnegative scalar multiples of the claimed vertices above form extremal rays of the cone of nonnegative $p \times q$ Monge matrices.
This implies both conditions for the vertex set of the polytope $\M_{p \times q}$, as follows.
On one hand, any matrix in the Monge cone can be written as a nonnegative linear combination of these claimed vertices; upon restriction to $\P_{p \times q}$, this implies that any matrix in $\M_{p \times q}$ can be written as a \emph{convex} combination of these claimed vertices.
On the other hand, none of the claimed vertices can be written as a nonnegative linear combination (much less a convex combination) of the other claimed vertices.
Hence the convex hull of any proper subset of the claimed vertices is not all of $\M_{p \times q}$.
\end{proof}

\begin{rem}
\label{rem:OEIS vertices square}
    In the case of square matrices, Proposition~\ref{prop:Mpq vertices} yields $\#V(\mathcal{M}_{p \times p}) = 2[(p-1)^2 + p]$.
    These numbers appear in OEIS entry \href{https://oeis.org/A051890}{A051890}.
    In fact, $\#V(\mathcal{M}_{p \times p})$ equals the number of regions obtained from $p$ ellipses in the plane (where any two ellipses meet in four points).
    The OEIS also describes some intriguing instances of these numbers in neutron shell filling.
\end{rem}

\begin{theorem}
\label{thm:facets generic}
    The facets of $\M_{p\times q}$ are the following:
    \begin{enumerate}
        \item For each $(i,j)$ with $1 \leq  i < p$ and $1 \leq j < q$, there is a facet $F_{ij}$ which is the convex hull of the set
        \[
        V(\M_{p \times q}) \Big\backslash \Big\{\NE{i \times (q-j)}, \; \SW{(p-i) \times j}\Big\}.
        \]
        \item For each $(i,j)$ with $1 \leq i \leq p$ and $1 \leq j \leq q$, there is a facet $G_{ij}$ which is the convex hull of the set
        \[
        \{ \NE{a \times b} : \text{$a < i$ or $b \leq q-j$}\} \cup \{ \SW{a \times b} : \text{$a \leq p-i$ or $b < j$}\} \cup \{\HOR{a} : a \neq i\} \cup \{\VER{b} : b \neq j \}.
        \]
    \end{enumerate}
    Therefore the number of facets is $(p-1)(q-1)+pq$.
\end{theorem}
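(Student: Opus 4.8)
The plan is to extract an irredundant $H$-representation of $\M_{p\times q}$ and then match each defining inequality to the set of vertices lying on it. First I would replace the full family of Monge inequalities by the $(p-1)(q-1)$ \emph{adjacent} ones. Writing $s_{ij}\coloneqq c_{i,j+1}+c_{i+1,j}-c_{ij}-c_{i+1,j+1}$ for the slack of the window based at $(i,j)$, the telescoping identity $c_{iJ}+c_{Ij}-c_{ij}-c_{IJ}=\sum_{i\le a<I}\sum_{j\le b<J}s_{ab}$ shows that $C$ is Monge if and only if every $s_{ij}\ge 0$. Adjoining the $pq$ nonnegativity constraints $c_{ij}\ge 0$, this presents $\M_{p\times q}$ (inside the hyperplane $\sum c_{ij}=1$) by exactly $pq+(p-1)(q-1)$ inequalities. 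Since every facet of a polytope is cut out by one of the inequalities in any of its $H$-representations, the facets form a subset of these; so it remains to prove that each inequality is irredundant---hence facet-defining---and to identify its tight vertices.

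Identifying the tight vertices is the clean step, and it turns on the fact that each listed vertex is a rank-one $0/1$ matrix: its $(i',j')$ entry factors as $f(i')g(j')$ for indicator functions $f,g$. Consequently the slack functional factors as $s_{ij}=-(f(i)-f(i+1))(g(j)-g(j+1))$ and is therefore nonzero at no more than one window. Carrying this out on the four families shows that the inequality $s_{ij}\ge 0$ is tight at every vertex except $\NE{i\times(q-j)}$ and $\SW{(p-i)\times j}$ (where the slack equals $1$), which is precisely the set $F_{ij}$; and that $c_{ij}=0$ holds at a vertex exactly under the stated support conditions, which is precisely $G_{ij}$. The latter is just a matter of reading off, for each of the four rank-one patterns, whether it vanishes at the coordinate $(i,j)$.

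To prove irredundancy I would exhibit, for each defining inequality, a point of $\M_{p\times q}$ at which \emph{only} that inequality is active: near such a point the sole binding constraint is the one in question, so the corresponding face has dimension $pq-2$ and is a genuine facet, with distinct inequalities giving distinct facets. For a Monge inequality I would begin from a strictly positive, strictly Monge matrix (for instance $c_{i'j'}=N-\varepsilon\, i'j'$, whose every slack equals $\varepsilon$) and add a small multiple of the matrix whose unique nonzero slack lies at $(i,j)$---available because the slack map surjects onto $\mathbb{R}^{(p-1)(q-1)}$, its kernel being the additive matrices $c_{ij}=f(i)+g(j)$---tuned so that $s_{ij}=0$ while all remaining slacks and all entries stay positive. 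For a nonnegativity inequality I would instead build a strictly Monge matrix that is positive everywhere except for a single zero pinned at $(i,j)$, which is possible at every position (interior or boundary) as a ``valley'' whose minimum sits at that coordinate. Given these certificates, the count $pq+(p-1)(q-1)$ of facets is immediate.

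The main obstacle is this last step: producing, uniformly over $(i,j)$, certificate matrices at which exactly one constraint is tight while every other slack and every other entry remains strictly positive. The reduction to adjacent inequalities, the rank-one factorization, and the resulting vertex descriptions are routine, but the simultaneous control needed for irredundancy requires a careful, position-dependent construction. A less delicate alternative is to prove facetness directly, by producing $pq-1$ affinely independent vertices inside each of $F_{ij}$ and $G_{ij}$; this is concrete but costs more bookkeeping, since one must verify affine independence among the differences of the rank-one vertices.
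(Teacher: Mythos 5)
Your proposal follows the same route as the paper's proof: pass to the irredundant $H$-representation given by the adjacent Monge inequalities together with the nonnegativity constraints, and then read off which vertices make each inequality tight (your rank-one factorization of the slack is a cleaner packaging of the paper's observation that a vertex fails to lie on $F_{ij}$ exactly when one of the four window entries is nonzero, and the $G_{ij}$ description is the same support check). The only real difference is that you are more explicit than the paper about certifying irredundancy via interior points with a single active constraint --- the paper simply asserts minimality of the representation --- and your sketched certificates (perturbing $c_{i'j'}=N-\varepsilon i'j'$, whose slacks are all $\varepsilon$, using surjectivity of the slack map) do go through.
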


\begin{proof}
    An H-representation of $\M_{p \times q}$ is given by the system consisting of the Monge inequalities~\eqref{Monge defining inequality} and the nonnegative conditions~\eqref{nonnegative}.
    This representation is not minimal, however, since many of the Monge inequalities~\eqref{Monge defining inequality} are redundant.
    In fact, it is straightforward to check~\cite{Burkard}*{eqn.~(6)} that a matrix is Monge if and only if the Monge property holds for adjacent rows and adjacent columns.
    Hence we can replace the inequalities~\eqref{Monge defining inequality} with the following system:
    \begin{equation}
        \label{Monge minimal}
        c_{ij} + c_{i+1, j+1} 
        \leq c_{i, j+1} + c_{i+1,j}, \quad \text{for all $1 \leq i < p$ and $1 \leq j < q$}.
    \end{equation}
    Therefore a minimal H-representation for $\M_{p \times q}$ is given by the $(p-1)(q-1)$ many inequalities in~\eqref{Monge minimal}, and the $pq$ many inequalities in~\eqref{nonnegative}.
    It remains to determine the vertices for which each defining inequality is tight.

    We claim that each face $F_{ij}$ defined in the theorem is the facet on which the corresponding inequality in~\eqref{Monge minimal} is tight.
    To see this, note that~\eqref{Monge minimal} is an equality at every vertex of $\M_{p \times q}$ (see Proposition~\ref{prop:Mpq vertices} above) \emph{except} those vertices for which exactly one of the four neighboring entries in~\eqref{Monge minimal} is nonzero.
    But there are only two such vertices, namely, those where $c_{i,j+1}$ (resp., $c_{i+1,j}$) is the only nonzero entry of the four; these two vertices are the two vertices excluded from $F_{ij}$.
    Therefore each $F_{ij}$ is a facet.

    Finally, to prove that each face $G_{ij}$ is the facet on which the corresponding inequality in~\eqref{nonnegative} is tight,
    we observe that each inequality in~\eqref{nonnegative} is tight at precisely those vertices $C$ such that $c_{ij} = 0$.
    It is then easy to see that the vertices defining $G_{ij}$ are precisely the vertices for which $c_{ij} = 0$.
\end{proof}

By combining Proposition~\ref{prop:Mpq vertices} and Theorem~\ref{thm:facets generic}, it is straightforward to write down (or at least to program) the \emph{vertex--facet incidence matrix} of $\M_{p \times q}$, where the rows and columns correspond to the vertices and facets, respectively, and each entry is either $1$ (if the facet contains the vertex) or $0$ (otherwise).
From this matrix, then, it is possible to construct the face lattice of $\M_{p \times q}$.
\textit{A fortiori}, one can determine a list of edges by inspecting the vertex--facet incidence matrix, and finding vertex pairs $\{v,w\}$ such that there is no third vertex $x$ which is contained in every facet containing $\{v,w\}$.
In spite of this, we believe that an \emph{explicit} description and enumeration of the edges is still valuable, for example in describing the \emph{polytope graph} of $\M_{p \times q}$ (i.e., the graph of the 1-skeleton, i.e., just the vertices and edges).
Hence we include the following theorem describing the set of edges, which may be omitted without loss of continuity:

\begin{theorem}
\label{thm:edges generic}

    The edges of $\M_{p \times q}$ are the convex hulls of the following vertex pairs:

    \begin{enumerate}
        \item all pairs $\{ \VER{i}, \NE{a \times b} \}$, except those for which $i = p$ and $a = p - 1$;
        \item all pairs $\{ \VER{i}, \SW{a \times b} \}$, except those for which $i = 1$ and $a = p - 1$;
        \item all pairs $\{ \HOR{j}, \NE{a \times b} \}$, except those for which $j = 1$ and $b = q - 1$;
        \item all pairs $\{ \HOR{j}, \SW{a \times b} \}$, except those for which $j = q$ and $b = q - 1$;
        \item all pairs in $\{ \NE{a \times b} : 1 \leq a < p, \: 1 \leq b < q\}$;
        \item all pairs in $\{ \SW{a \times b} : 1 \leq a < p, \: 1 \leq b < q\}$;
        \item those pairs $\{ \NE{a \times b}, \SW{a' \times b'} \}$ such that
        \begin{itemize}
            \item $a + a' < p$, or
            \item $b + b' < q$, or 
            \item $a + a' = p$ and $b+b' = q$.
        \end{itemize}
        \item all pairs in $\{ \HOR{i} : 1 \leq i \leq p \} \cup \{ \VER{j} : 1 \leq j \leq q \}$; but 
        \begin{itemize} 
        \item if $p=2$, then $\{ \HOR{1}, \HOR{2}\}$ is not an edge;
        \item if $q=2$, then $\{ \VER{1}, \VER{2} \}$ is not an edge.
        \end{itemize}
    \end{enumerate}
    
    For $p,q > 2$, the  number of edges in $\M_{p \times q}$ is
    \[
    \frac{1}{4} \Big[ 24 - 18 (p+q) + 19 p q + 2 (p^2 + q^2) - 7 (p^2 q + p q^2 - p^2 q^2) \Big].
    \]
    (Subtract one for each parameter $p$ or $q$ that equals 2.)
\end{theorem}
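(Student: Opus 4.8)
\textit{Proof proposal.}
The plan is to apply the edge criterion recalled just above the statement: a pair of distinct vertices $\{v,w\}$ spans an edge of $\M_{p\times q}$ if and only if there is no third vertex contained in every facet that contains $\{v,w\}$; equivalently, for every other vertex $x$ there is a facet containing both $v$ and $w$ but not $x$. Since Theorem~\ref{thm:facets generic} gives the complete minimal list of facets, namely the families $F_{ij}$ and $G_{ij}$, the first step is to translate "some facet separates $x$ from $\{v,w\}$'' into two concrete conditions. Writing $\mathrm{supp}(M)$ for the set of nonzero coordinates of a matrix $M$, the facet $G_{ij}$ consists of exactly those vertices vanishing at $(i,j)$, so some $G$-facet separates $x$ precisely when $\mathrm{supp}(x)\not\subseteq\mathrm{supp}(v)\cup\mathrm{supp}(w)$. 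By contrast $F_{ij}$ omits exactly the two vertices $\NE{i\times(q-j)}$ and $\SW{(p-i)\times j}$, so an $F$-facet can separate only a corner vertex $x$, and it does so if and only if $x$ is of type $\NE{a\times b}$ (resp.\ $\SW{a\times b}$) and its \emph{antipode} $\SW{(p-a)\times(q-b)}$ (resp.\ $\NE{(p-a)\times(q-b)}$) is not among $v,w$. Hence $x$ fails to be separated exactly when $\mathrm{supp}(x)\subseteq\mathrm{supp}(v)\cup\mathrm{supp}(w)$ and, in addition, either $x$ is a line vertex ($\HOR$ or $\VER$) or $x$ is the antipode of $v$ or $w$.

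I would then run through the pair types determined by Proposition~\ref{prop:Mpq vertices}. The supports are geometrically simple --- corner rectangles for $\NE,\SW$ and full lines for $\HOR,\VER$ --- so in almost every case $\mathrm{supp}(v)\cup\mathrm{supp}(w)$ cannot contain the support of a third vertex and separation is automatic. In particular, whenever neither $v$ nor $w$ is of type $\SW$, every $\NE$ vertex is separated by its $F$-facet (its antipode is absent), and dually for $\SW$; and a full row or column fits inside two corner rectangles only when those rectangles jointly fill a boundary line, which forces a degenerate thin block. This reduces the hunt for non-edges, in each mixed case, to the single antipodal corner vertex together with these boundary configurations. A short support computation then yields exactly the listed obstructions: for a line-plus-corner pair the antipode lies in $\mathrm{supp}(v)\cup\mathrm{supp}(w)$ precisely when the block has maximal height (or width) and the line $v$ is the matching boundary edge of the matrix; for an $\NE{a\times b}$--$\SW{a'\times b'}$ pair the antipodal corner vertex is a genuine third vertex trapped in the union exactly when $a+a'\ge p$, $b+b'\ge q$, and $(a+a',b+b')\ne(p,q)$; and two parallel line vertices trap a transverse line only in the degenerate dimensions $p=2$ or $q=2$. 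For $\NE$--$\NE$ and $\SW$--$\SW$ pairs no antipode is present and no line is trapped, so all such pairs are edges.

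Finally I would count. Each of the four line-plus-corner families contributes all of its pairs minus a one-parameter family of exceptions ($q-1$ or $p-1$ of them), the two like-corner families contribute $\binom{(p-1)(q-1)}{2}$ edges each, and the two line families together contribute $\binom{p+q}{2}$ edges when $p,q>2$. The only genuinely combinatorial count is the $\NE$--$\SW$ family, where I would count non-edges as the quadruples with $a+a'\ge p$ and $b+b'\ge q$ but $(a+a',b+b')\ne(p,q)$, using the elementary identity $\#\{(a,a')\in[1,p-1]^2 : a+a'\ge p\}=\binom{p}{2}$ and its analogue for $b$. Summing all contributions and simplifying should produce the stated bidegree-$(2,2)$ polynomial, with the parenthetical corrections coming from the two extra parallel-line non-edges when $p=2$ or $q=2$. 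As a sanity check, for $p=q=3$ the formula gives $78$, matching the case tally ($40$ mixed, $6+6$ like-corner, $11$ for $\NE$--$\SW$, and $15$ line-line).

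The main obstacle is organizational rather than conceptual. One must verify that the dangerous antipodal vertex is genuinely distinct from $v$ and $w$ --- this is exactly what makes the boundary case $a+a'=p,\ b+b'=q$ an edge rather than an exception --- and one must check that the trapped-line configurations never produce non-edges outside the regions already detected by the antipode, so that no spurious exceptions are missed or double-counted. Keeping these degeneracies straight, together with the summation for the $\NE$--$\SW$ family, is where essentially all the care is required.
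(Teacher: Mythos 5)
Your proposal is correct, but it takes a genuinely different route from the paper's proof. The paper argues directly from the $V$-representation: it declares $\{v,w\}$ an edge when the combined support of $v+w$ cannot be reproduced by any other collection of vertices (supplemented, for the nested supports arising in items (5)--(6), by the observation that the larger matrix entries pin down the smaller block), and it finds the exceptions by exhibiting the competing vertex collections explicitly. You instead use the vertex--facet incidence criterion --- the minimal face containing $\{v,w\}$ is the intersection of all facets containing both, so $\{v,w\}$ is an edge iff no third vertex survives that intersection --- which the paper mentions only as a programmable alternative in the paragraph preceding the theorem. Combined with Theorem~\ref{thm:facets generic}, your translation is exactly right: the $G$-facets separate a third vertex $x$ unless $\mathrm{supp}(x)\subseteq\mathrm{supp}(v)\cup\mathrm{supp}(w)$, and the unique $F$-facet excluding a corner vertex $x$ separates it unless its antipode lies in $\{v,w\}$. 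This buys you a mechanical two-condition test that handles the delicate cases cleanly (nested $\NE$ blocks; the boundary case $a+a'=p$, $b+b'=q$, where the would-be trapped antipode degenerates to $v$ or $w$ itself) without the ad hoc supplements the paper's support argument requires; the cost is that your proof leans on the completeness of the facet list, whereas the paper's is self-contained. Your exceptional sets and counts agree with the paper's throughout --- in item (7) your non-edge count $\binom{p}{2}\binom{q}{2}-(p-1)(q-1)$ matches the paper's $\sum_{a,b}(ab-1)$, and your $p=q=3$ tally of $78$ is the correct value of the stated formula. One small remark: as printed, items (1)--(4) of the statement attach the exception $i=p$, $a=p-1$ to $\VER{i}$ (and $j=q$, $b=q-1$ to $\HOR{j}$), which is consistent with the definitions only after swapping $\VER{}\leftrightarrow\HOR{}$; your coordinate-free description of the exceptions (a block of maximal height or width paired with the matching boundary line) is the intended, correct content.
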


\begin{proof}
    Let $v$ and $w$ be distinct vertices of a polytope.
    Recall that ${\rm conv}\{v,w\}$ is an edge, if and only if no convex combination of $v$ and $w$ can be written as a convex combination of points which do not all lie in ${\rm conv}\{v,w\}$.
    In the case of our polytope $\M_{p \times q}$, this property can be easily detected by inspecting the support of the matrix $u + v$.
    Noting that the itemized list of eight pair types above comprises all possible combinations of vertices, we verify that the specified pairs are indeed the pairs that form edges:

    In item (1), the combined support of these pairs is unique, except for the following pairings: if we pair the bottommost horizontal strip $\HOR{p}$ with a vertex $\NE{(p-1) \times b}$, then we obtain the same combined support as we do if we combine $\SW{1 \times (q-b)}$ with $\VER{(q-b+1)}, \VER{(q-b+2)}, \ldots, \VER{q}$.
    The argument is identical for items (2)--(4).

    In item (5), where we pair any two northeast blocks, either the combined support is unique, or else one support contains the other, in which case the smaller support is determined by the larger matrix entries in any convex combination.
    The same argument holds for item (6).

    In item (7), the combined support is unique as long as it does not contain a full horizontal or vertical strip.
    Therefore, picturing the two supports as rectangles (one in the northeast corner, one in the southwest), we have an edge as long as the two rectangles do not overlap or even touch (where ``touching'' excludes the case where they touch only at their corners).
    The three bulleted conditions under item (7) are precisely those required for this non-touching condition.

    In item (8), as long as $p,q>2$ it is clear that any two full strips (whether horizontal or vertical or one of each) have a unique combined support.
    If, however, $p=2$, then the combined support of $\HOR{1}$ and $\HOR{2}$ equals the combined support of $\VER{1}, \ldots, \VER{q}$, and vice versa for $q=2$.

    As for the enumeration, it is straightforward to check that
    \begin{itemize}
        \item items (1) and (2) each contribute $p(p-1)(q-1) - (q-1)$ edges;
        \item items (3) and (4) each contribute $q(p-1)(q-1) - (p-1)$ edges;
        \item items (5) and (6) each contribute $\binom{(p-1)(q-1)}{2}$ edges;
        \item by summing over pairs $(a,b)$ and counting the number of valid pairs $(a',b')$ for each $(a,b)$, we see that item (7) contributes the following number of edges:
        \[        \underbrace{\sum_{a,b}}_{\mathclap{\substack{(p-1)(q-1) \\  \text{summands}}}} \Big[\overbrace{(p-1)(q-1)}^{\substack{\text{all pairs} \\ (a',b')}} - \underbrace{ab}_{\mathclap{\substack{\text{support} \\ \text{of $\NE{a \times b}$}}}} + \overbrace{1}^{\mathclap{\substack{\text{allowed to touch} \\ \text{at corners}}}} \Big] = [(p-1)(q-1)]^2 + (p-1)(q-1) - \underbrace{\sum_{a,b} ab}_{\mathclap{\substack{(1+ \ldots + p-1)(1+ \cdots + q-1) \\[1ex] = \frac{p(p-1)}{2} \cdot \frac{q(q-1)}{2}}}}
\]
\item if $p,q>2$, then item (8) contributes $\binom{p+q}{2}$ edges; subtract one edge for each parameter $p,q$ that equals 2.
    \end{itemize}
    Summing these results and expanding, one obtains the expression in the theorem.
\end{proof}

\begin{ex}
    Combining the three main results (vertices, facets, edges) in this subsection, we can determine that the Monge polytope $\M_{2 \times 2}$ has $f$-vector $(1,6,9,5,1)$, with vertices, edges, and facets as depicted below.
    In fact, $\M_{2 \times 2}$ is a triangular prism lying inside the three-dimensional simplex $\P_{2 \times 2}$ embedded in $\mathbb R^4$; see the plot in Figure~\ref{fig:M2}.
    
\begin{center}
\tdplotsetmaincoords{70}{120} 

\begin{tikzpicture}[scale=2.2,tdplot_main_coords]
    \coordinate (A) at (0,0,0);
    \coordinate (B) at (1,0,0);
    \coordinate (C) at (0.5,0.866,0);
    \coordinate (D) at (0,0,1);
    \coordinate (E) at (1,0,1);
    \coordinate (F) at (0.5,0.866,1);
    
    \fill[green!30!gray,opacity=.3] (A) -- (B) -- (C) -- cycle;
    
    \fill[green!30!gray,opacity=.3] (D) -- (E) -- (F) -- cycle;

    \fill[blue!30!gray,opacity=.3] (D) -- (F) -- (C) --  (A) -- cycle;

    \fill[blue!30!gray,opacity=.3] (D) -- (E) -- (B) --  (A) -- cycle;
    
    \draw (A) -- (B);
    \draw (B) -- (C);
    \draw (C) -- (A);
    \draw (D) -- (A);
    \draw (D) -- (E);
    \draw (F) -- (D);
    \draw (B) -- (E);
    \draw(C) -- (F);
    
    \draw[dashed] (E) -- (F);

    \node[anchor=south west] at (A) {$\left[\begin{smallmatrix}0&1\\0&0\end{smallmatrix}\right]$};
    \node[anchor=north east] at (B) {$\left[\begin{smallmatrix}.5&.5\\0&0\end{smallmatrix}\right]$};
    \node[anchor=north west] at (C) {$\left[\begin{smallmatrix}0&.5\\0&.5\end{smallmatrix}\right]$};
    \node[anchor=south] at (D) {$\left[\begin{smallmatrix}0&0\\1&0\end{smallmatrix}\right]$};
    \node[anchor=south east] at (E) {$\left[\begin{smallmatrix}.5&0\\.5&0\end{smallmatrix}\right]$};
    \node[anchor=south west] at (F) {$\left[\begin{smallmatrix}0&0\\.5&.5\end{smallmatrix}\right]$};
\end{tikzpicture}
\end{center}
\end{ex}

\begin{figure}
    \centering
    \includegraphics[width=.6\linewidth]{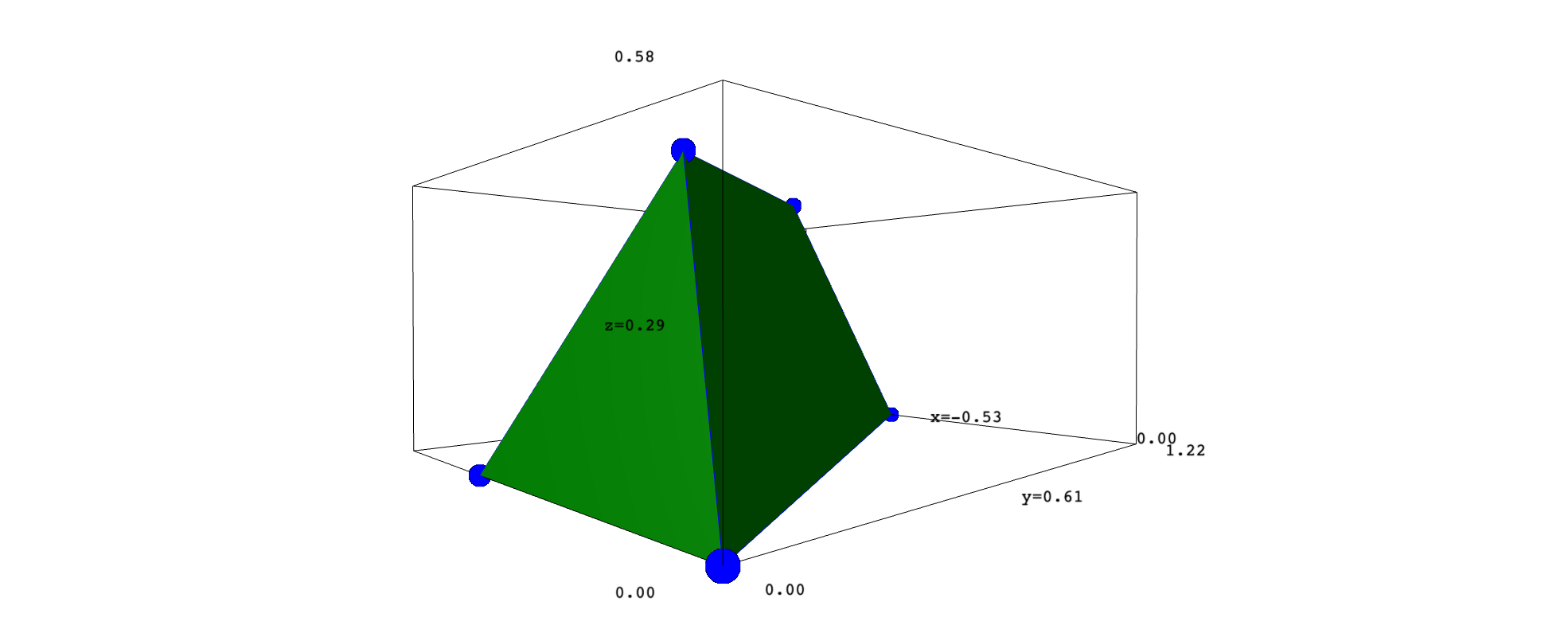}
    \caption{The Monge polytope $\M_{2 \times 2}$ is a triangular prism, with $f$-vector $(1,6,9,5,1)$.
    By Theorem~\ref{thm:volume 2-row}, its normalized volume is $1/4$, which means that it occupies exactly $1/4$ of the probability simplex $\P_{2 \times 2}$.}
    \label{fig:M2}
\end{figure}

\subsection{Normalized volume of $\M_{2 \times p}$}

We define the normalized volume just as in the symmetric case, namely,
\[
\widehat{{\rm Vol}}(\M_{p \times q}) \coloneqq \frac{{\rm Vol}(\M_{p \times q})}{{\rm Vol}(\P_{p \times q})}.
\]
In the case of two-row matrices, there is an especially nice formula for this normalized volume:
\begin{theorem}
\label{thm:volume 2-row}
    We have $\widehat{{\rm Vol}}(\M_{2 \times p}) = \dfrac{1}{p!}$.
\end{theorem}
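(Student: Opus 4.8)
The plan is to adapt the lattice-point/generating-function method used in the proof of Theorem~\ref{thm:prob HM}. First I would record that, exactly as in~\eqref{prob is limit}, the normalized volume is the limiting density of lattice points,
\[
\widehat{{\rm Vol}}(\M_{2\times p}) = \lim_{k\to\infty}\frac{\#\{C\in\mathbb{N}^{2\times p}: C \text{ Monge},\ \textstyle\sum_{i,j}c_{ij}=k\}}{\#\{A\in\mathbb{N}^{2\times p}:\ \textstyle\sum_{i,j}a_{ij}=k\}},
\]
where the denominator counts weak compositions of $k$ into $2p$ parts and hence equals $\binom{k+2p-1}{2p-1}\sim k^{2p-1}/(2p-1)!$ by~\eqref{composition asymptotic}. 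It then suffices to show that the numerator is asymptotic to $\frac{1}{p!}\cdot\frac{k^{2p-1}}{(2p-1)!}$.

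Next I would parametrize the two-row Monge integer matrices. By the reduction to adjacent rows and columns~\eqref{Monge minimal} used in Theorem~\ref{thm:facets generic}, a matrix $C\in\mathbb{N}^{2\times p}$ is Monge precisely when the difference sequence $d_j\coloneqq c_{1j}-c_{2j}$ is weakly increasing, $d_1\le\cdots\le d_p$. Writing $m_j\coloneqq\min(c_{1j},c_{2j})\in\mathbb{N}$, the pair $\big((m_j),(d_j)\big)$ with the $m_j\ge 0$ free and $(d_j)$ weakly increasing is in bijection with $C$, and the entry sum is $\sum_{i,j}c_{ij}=2\sum_j m_j+\sum_j|d_j|$. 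Since the $m_j$ are unconstrained, the numerator generating function $f(t)\coloneqq\sum_k f_k\,t^k$ factors as
\[
f(t)=\frac{1}{(1-t^2)^p}\cdot g(t),\qquad g(t)\coloneqq\sum_{d_1\le\cdots\le d_p}t^{\sum_j|d_j|}.
\]
This factorization is the analytic shadow of a Stanley decomposition of the monoid of two-row Monge integer matrices: the factor $(1-t^2)^{-p}$ comes from the free generators $\VER{j}$, while $g$ records the weakly increasing difference part, built from the generators $\NE{1\times b}$, $\SW{1\times b}$, $\HOR{1}$, $\HOR{2}$ of Proposition~\ref{prop:Mpq vertices}. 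To evaluate $g$, I would split a weakly increasing integer sequence according to the number $a$ of its negative entries ($0\le a\le p$): the negatives $d_1\le\cdots\le d_a\le-1$ form, via $(|d_1|,\dots,|d_a|)$, a partition into exactly $a$ parts, with generating function $t^a/\prod_{i=1}^a(1-t^i)$, while $0\le d_{a+1}\le\cdots\le d_p$ form a partition into at most $p-a$ parts, with generating function $\prod_{i=1}^{p-a}(1-t^i)^{-1}$. Summing the disjoint cases gives
\[
g(t)=\sum_{a=0}^{p}\frac{t^a}{\prod_{i=1}^{a}(1-t^i)\,\prod_{i=1}^{p-a}(1-t^i)}.
\]

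Finally I would extract the asymptotics. Clearing denominators presents $f$ as a rational function whose poles lie at roots of unity; a short check shows that $t=1$ is the unique pole of minimal modulus and that its order is exactly $2p$ (strictly larger than the order at most $p+\lfloor p/2\rfloor$ at $t=-1$ and at most $\lfloor p/d\rfloor$ at a primitive $d$-th root of unity for $d\ge 3$, where $(1-t^2)^p$ does not vanish), so Lemma~\ref{lemma:asymptotic} applies with $\alpha=1$ and $m=2p$ and yields $f_k\sim\frac{A}{(2p-1)!}\,k^{2p-1}$, where $A=\lim_{t\to1}(1-t)^{2p}f(t)$. Using $\lim_{t\to1}(1-t)^a/\prod_{i=1}^a(1-t^i)=1/a!$ term by term in the sum for $g$, I obtain $\lim_{t\to1}(1-t)^p g(t)=\sum_{a=0}^p\frac{1}{a!\,(p-a)!}=\frac{2^p}{p!}$ by the binomial theorem, whence $A=\lim_{t\to1}\frac{(1-t)^p g(t)}{(1+t)^p}=\frac{2^p/p!}{2^p}=\frac{1}{p!}$. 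Dividing the numerator asymptotic by the denominator $\binom{k+2p-1}{2p-1}\sim k^{2p-1}/(2p-1)!$ then gives the claimed value $1/p!$. The \textbf{main obstacle}, and the step requiring the most care, is establishing the formula for $g(t)$---that is, correctly organizing the two-sided weakly increasing sequences into the disjoint partition-type pieces (equivalently, writing down the Stanley decomposition of the monoid). Once $g$ is in the displayed form, the term-by-term limit and the binomial collapse are immediate, and the dominant-singularity bookkeeping is routine.
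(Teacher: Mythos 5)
Your proof is correct, and it reaches the same generating function as the paper by a genuinely different route. The paper's proof constructs an explicit Stanley decomposition of the monoid ${\rm MM}_{2\times p}(\mathbb{N})$ indexed by subsets $S\subseteq[p]$, with a somewhat delicate uniqueness algorithm, and reads off $f(t)=\bigl(\sum_S t^{|\delta_S|}\bigr)/\bigl((1-t^2)^p\prod_{j=1}^p(1-t^j)\bigr)$ with $N(1)=2^p$. You instead use the elementary bijection $C\leftrightarrow\bigl((m_j),(d_j)\bigr)$ with $m_j=\min(c_{1j},c_{2j})$ and $d_j=c_{1j}-c_{2j}$, reducing the count to weakly increasing integer sequences and hence to two partition generating functions; after clearing denominators your numerator is $\sum_{a=0}^p t^a\binom{p}{a}_t$ (a Gaussian binomial sum), which again gives $N(1)=\sum_a\binom{p}{a}=2^p$ over the same denominator $(1-t^2)^p\prod_{j=1}^p(1-t^j)$, so the two generating functions agree identically and the asymptotic extraction is the same. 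Your derivation is shorter and more transparent for the purpose of computing the volume; what it gives up is the byproduct the authors care about, namely a canonical form for writing an arbitrary two-row Monge matrix as an $\mathbb{N}$-combination of the polytope's vertices (the factor $(1-t^2)^{-p}$ in your argument corresponds to the generators $\VER{1},\dots,\VER{p}$, but your difference-sequence parametrization does not by itself single out which combination of the remaining vertices realizes a given $(d_j)$). Two small points of care: when invoking Lemma~\ref{lemma:asymptotic} you should clear to a single $N/D$ and note $N(1)=2^p\neq 0$ (your term-by-term limit implicitly shows this since every summand is positive at $t=1$), and your bound on the pole order at a primitive $d$-th root of unity for $d\ge 3$ should be stated for the common denominator $\prod_{j=1}^p(1-t^j)$, where it is indeed $\lfloor p/d\rfloor<2p$; neither affects the conclusion.
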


Our proof of this theorem will be similar to that of Theorem~\ref{thm:prob HM}.
Because nonsymmetric Monge matrices do not admit \emph{unique} convex combinations of vertices, however, it will first take some work to determine the required generating function.
The key will be to write down a Stanley decomposition for the set of $2 \times p$ Monge matrices with nonnegative entries, which will establish a canonical form for writing Monge matrices as linear combinations of certain fundamental matrices.
From this will follow the desired generating function and the proof of Theorem~\ref{thm:volume 2-row}.

A \emph{Stanley decomposition} of a commutative monoid $M$ is a finite disjoint union
\begin{equation}
    \label{Stanley decomp general}
    M = \bigsqcup_{i = 1}^\ell \delta_i + \mathbb{N}[\gamma_{i,1}, \ldots, \gamma_{i,m}]
\end{equation}
where each component is the translate (by $\delta_i$) of a free commutative submonoid generated by elements $\gamma_{ij}$.  (See~\cite{Stanley}*{Thm.~5.2}.)
A Stanley decomposition~\eqref{Stanley decomp general} leads to a canonical form on $M$ in the following sense: for each $\mu \in M$, there exists a unique $i \in \{1, \ldots, \ell\}$ and unique integers $a_1, \ldots, a_m \in \mathbb{N}$ such that
\[
    \mu = \delta_i + \sum_{j=1}^m a_j \gamma_{i,j}.
\]

For our present purpose, we take as our monoid ${\rm MM}_{2 \times p}(\mathbb{N})$, the set of $2 \times p$ Monge matrices with entries in $\mathbb{N}$, under the operation of matrix addition.
Let $[p] \coloneqq \{1, \ldots, p\}$.
We claim that there is a Stanley decomposition of the form
\begin{equation}
    \label{Stanley decomp MM2p}
    {\rm MM}_{2 \times p}(\mathbb{N}) = \bigsqcup_{S \subseteq [p]} \delta_S + \mathbb{N}\Big[ \VER{1}, \ldots, \VER{p}, \gamma_{S,1}, \ldots, \gamma_{S,p} \Big],
\end{equation}
where $\delta_S$ and the $\gamma_{S,j}$ are all constructed from a certain matrix $\Gamma_S$, as follows.

Each $S \subseteq [p]$ determines a $2 \times p$ Monge matrix $\Gamma_S$, obtained by filling the right end of the top row with the elements of $S$, and filling the left end of the bottom row with the elements in the complement $\overline{S} \subseteq [p]$.
(Note that the Monge condition requires that the top entries increase from left to right, while the bottom entries increase from right to left.)
For example, if $p=12$ and $S = \{3,4,5,7,10\}$, then
\[
\Gamma_S = \begin{bmatrix}
    0&0&0&0&0&0&0&3&4&5&7&10\\
    12&11&9&8&6&2&1&0&0&0&0&0
\end{bmatrix}.
\]

The element $\delta_S$ is obtained from $\Gamma_S$ by first zeroing out any entries in the bottom row which are smaller than the smallest entry in the top row; then in each of the two rows, replace the $i$th string of consecutive numbers (in ascending order) by the string of all $i$'s.
In the example above, we zero out entries 1 and 2, and then obtain
\[
\delta_S = \begin{bmatrix}
    0&0&0&0&0&0&0&1&1&1&2&3\\
    3&3&2&2&1&0&0&0&0&0&0&0
\end{bmatrix}
\]

For each $j = 1, \ldots, p$, the generator $\gamma_{S,j}$ is the binary matrix obtained from $\Gamma_S$ by replacing each entry $i$ by
\[
\begin{cases}
    1,& i > p-j,\\
    0 & \text{otherwise}.
\end{cases}
\]
In the example above, for instance, we have
\[
\gamma_{S,8} = \begin{bmatrix}
    0&0&0&0&0&0&0&0&0&1&1&1\\
    1&1&1&1&1&0&0&0&0&0&0&0
\end{bmatrix}.
\]
Note that there are exactly $j$ nonzero entries in $\gamma_{S,j}$, and $\gamma_{S,j+1}$ is obtained by changing a single 0 to a 1.
We have defined the $\gamma_{S,j}$'s precisely so that the entries of any matrix in $\delta_S + \mathbb{N}[\gamma_{S,1}, \ldots, \gamma_{S,p}]$ preserve the same ordering as their corresponding positions in $\Gamma_S$, where equality is allowed (1) among the positions corresponding to strings of equal numbers (in each row separately) in $\delta_S$, or (2) between top and bottom entries if and only if the corresponding bottom position in $\Gamma_S$ is greater than the corresponding top position in $\Gamma_S$.

\begin{lemma}
    \label{lemma:Stanley decomp}
    The monoid ${\rm MM}_{2 \times p}(\mathbb N)$ admits the Stanley decomposition~\eqref{Stanley decomp MM2p}, where $\delta_S$ and $\gamma_{S,j}$ are defined as above.
    Therefore we have the generating function
    \begin{equation}
        \label{gen func 2-row}
        f(t) \coloneqq \sum_{C \in {\rm MM}_{2 \times p}(\mathbb{N})} t^{|C|} = \frac{\sum_{S \subseteq [p]}t^{|\delta_S|}}{(1-t^2)^p \prod_{j=1}^p (1-t^j)},
    \end{equation}
    where $|C| \coloneqq \sum_{i,j} c_{ij}$ is the sum of the entries of $C$.
\end{lemma}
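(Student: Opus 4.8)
The plan is to prove that~\eqref{Stanley decomp MM2p} is a genuine Stanley decomposition; the generating function~\eqref{gen func 2-row} is then a purely formal consequence. Indeed, a translated free commutative monoid $\delta + \mathbb{N}[\gamma_1, \dots, \gamma_m]$ has generating function $t^{|\delta|}\prod_{i}(1-t^{|\gamma_i|})^{-1}$ with respect to the entry-sum $|\cdot|$; since $|\VER{j}| = 2$ and $|\gamma_{S,j}| = j$ are both independent of $S$, summing over the pieces factors the common denominator $(1-t^2)^p\prod_{j=1}^p(1-t^j)$ out of $\sum_S t^{|\delta_S|}$, giving~\eqref{gen func 2-row}. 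Thus the entire content lies in verifying that~\eqref{Stanley decomp MM2p} is valid, which I would break into three claims: (i) each piece is contained in ${\rm MM}_{2 \times p}(\mathbb{N})$; (ii) each piece is the translate of a \emph{free} monoid; and (iii) the pieces are pairwise disjoint and together exhaust ${\rm MM}_{2 \times p}(\mathbb{N})$.

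For (i), I would first record the elementary fact that a two-row nonnegative matrix is Monge precisely when the sequence $d_j \coloneqq c_{1j} - c_{2j}$ is weakly increasing in $j$; this is immediate from the adjacent-column form~\eqref{Monge minimal} of the Monge inequalities. Containment then follows by inspection of the construction: $\delta_S$, each $\VER{j}$, and each $\gamma_{S,j}$ satisfy this monotonicity (the nonzero cells of $\gamma_{S,j}$ occupy a left prefix of the bottom-active columns and a right suffix of the top-active columns), and since the inequalities~\eqref{Monge minimal} together with nonnegativity are preserved under addition, every $\mathbb{N}$-combination translated by $\delta_S$ again lies in ${\rm MM}_{2 \times p}(\mathbb{N})$. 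For (ii), I would show that the $2p$ generators are linearly independent over $\mathbb{Q}$, which forces the submonoid to be free. The key observation is that in each column exactly one cell is nonzero in $\Gamma_S$ --- call it the \emph{active} cell --- and that only $\VER{j}$ has a nonzero entry in the \emph{inactive} cell of column $j$, whereas the $\gamma_{S,j}$ are supported on active cells and have strictly nested supports. Reading off the inactive cells forces all $\VER$-coefficients to vanish, after which the strict nesting of the supports of $\gamma_{S,1}, \dots, \gamma_{S,p}$ forces the remaining coefficients to vanish.

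The heart of the proof, and the step I expect to be the main obstacle, is (iii), which I would establish by exhibiting the inverse map. Given $C \in {\rm MM}_{2 \times p}(\mathbb{N})$, the coefficient of $\VER{j}$ must equal the inactive entry of column $j$; subtracting these off, the active entries must satisfy $c^{\rm active}_j = (\delta_S)^{\rm active}_j + B_{v_j}$, where $v_j \in \{1, \dots, p\}$ is the rank of column $j$'s active cell in $\Gamma_S$ and $B_m = b_p + b_{p-1} + \cdots + b_{p-m+1}$ is a partial sum of the $\gamma$-coefficients. Recovering the $b_k$ as successive differences therefore requires that the numbers $c^{\rm active}_j - (\delta_S)^{\rm active}_j$, listed in order of increasing rank $v_j$, be weakly increasing and nonnegative. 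Consequently $S$ is forced to be the unique subset whose matrix $\Gamma_S$ realizes the \emph{order type} of the entries of $C$, with ties broken exactly by the two rules stated just before the lemma. The delicate point is to show that such an $S$ exists and is unique: existence and consistency of the required sorting is precisely where the monotonicity of $d_j$ is used, while the tie-breaking conventions are what make the assignment $C \mapsto S$ single-valued. Phrased geometrically, this is the statement that the half-open simplicial cones $\delta_S + \mathbb{R}_{\geq 0}[\VER{1}, \dots, \VER{p}, \gamma_{S,1}, \dots, \gamma_{S,p}]$ tile the Monge cone; I would verify the tiling by checking that the recovery algorithm (peel off the $\VER$-coefficients, then read the $\gamma$-coefficients as differences of the sorted active entries) produces, for each $C$, exactly one admissible $S$ together with nonnegative coefficients.
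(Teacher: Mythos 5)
Your plan is correct and follows essentially the same route as the paper: the paper's proof is precisely the recovery algorithm you outline in (iii) --- peel off the column minima as the $\VER{j}$-coefficients, read $S$ off the order type of the remaining entries using the stated tie-breaking rules, and recover the $b_j$ as successive differences along the ranking given by $\Gamma_S$. Your preliminary checks (i) and (ii) (containment via the monotonicity of $c_{1j}-c_{2j}$, and freeness via the active/inactive-cell argument) are correct and are left implicit in the paper.
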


\begin{proof}
    We need to show that each $2 \times p$ nonnegative integer Monge matrix lies in exactly one of the components in~\eqref{Stanley decomp MM2p}, indexed by a unique subset $S \subseteq [p]$.
    To do this, we give an algorithm for writing any matrix $C \in {\rm MM}_{2 \times p}(\mathbb{N})$ in its canonical form
    \[
    C = \delta_S + \sum_{j=1}^p a_j \VER{j} + \sum_{j=1}^p b_j \gamma_{S,j}.
    \].

    \begin{enumerate}
    \item Each $a_j$ is the minimum entry in the $j$th column of $C$.
    
    \item Set $C' \coloneqq C - \sum_{j=1}^p a_j \VER{j}$.

    \item Convert $C'$ into the matrix $\Gamma_S$ (which will determine $\delta_S$) as follows:

    \begin{enumerate}
        \item If there are, say, $z$ columns of $C'$ with two zeros, fill the bottom row in those columns with $1, 2, \ldots, z$ from right to left.

        \item Now replace the original (nonzero) entries in $C'$, in order from smallest to largest, with the numbers $z+1, z+2, \ldots, p$.
        To break ties, replace entries from left to right in the top row, and from right to left in the bottom row.
        If there is a tie between the top and bottom row, then replace the top entry.
    \end{enumerate}

    \item Set $C'' \coloneqq C' - \delta_S$.

    \item For each $j \in [p]$, let $(j)$ be the position in $\Gamma_S$ with entry $j$.
    Then $b_j = c''_{(j)} - c''_{(j-1)}$.
\end{enumerate}
This algorithm is justified as follows.
In steps (1) and (2), we carry out the unique way of ``stripping'' off multiples of the generators $\VER{i}$, which are common to every component of~\eqref{Stanley decomp MM2p}, until we obtain a matrix $C'$ with a zero in every column.
The $z$ columns (if any) containing \emph{two} zeros tell us that $\overline{S}$ must contain $1, \ldots, z$, which follows from our construction of $\delta_S$ above; hence step (3a) fills in these entries in $\Gamma_S$.
By our observation before the lemma, the ordering of the entries in $C'$ uniquely determines the matrix $\Gamma_S$, as described in step (3b).
Now having determined $\Gamma_S$, we know that $C'$ lies in the component of~\eqref{Stanley decomp MM2p} indexed by the subset $S$, and so in step (4) we subtract $\delta_S$, leaving just a combination $C''$ of the generators $\gamma_{S,j}$.
Since each $\gamma_{S,j+1}$ is obtained from $\gamma_{S,j}$ by changing a single 0 to a 1, and since the location of this new 1 is indexed by the entries of $\Gamma_S$, it is straightforward in step (5) to find the coefficient $b_j$ of each $\gamma_{S,j}$, which must be the difference between the entries in $C''$ which lie in positions with consecutive entries in $\Gamma_S$.

Now that we have shown that~\eqref{Stanley decomp MM2p} is a Stanley decomposition, the generating function~\eqref{gen func 2-row} is straightforward.
The free submonoid in each component of~\eqref{Stanley decomp MM2p} has $2p$ generators, where each $|\VER{j}|= 2$ and where $|\gamma_{S,j}| = j$ for $1 \leq j \leq p$; hence the generating function for each individual component is $t^{|\delta_S|}/(1-t^2)^p\prod_{j=1}^p (1-t^j)$.
Because the components in~\eqref{Stanley decomp MM2p} are all disjoint, the generating function for their union is just the sum of these individual generating functions.    
\end{proof}

\begin{rem}
    The numerator of the generating function~\eqref{gen func 2-row} has the combinatorial interpretation $\sum_\lambda t^{|\lambda|}$, ranging over all partitions $\lambda$ whose Young diagram has maximum hook length at most $p$.
    (The maximum hook is simply the union of the first row and first column of the Young diagram.) 
    To translate between the matrices $\delta_S$ and the corresponding Young diagrams, one converts the entries of $\delta_S$ into the diagonal lengths of $\lambda_S$: in particular, the $i$th largest entry in the top row of $\delta_S$ gives the length of the $i$th diagonal of $\lambda_S$, beginning with the main diagonal and moving right.
    Likewise, the $i$th largest entry in the bottom row of $\delta_S$ gives the length of the $i$th subdiagonal of $\lambda_S$, beginning just below the main diagonal and moving downward.
    For example, resuming the example above, one has
    \[
    \ytableausetup{smalltableaux,centertableaux}
    \delta_S = \begin{bmatrix}
    0&0&0&0&0&0&0&1&1&1&2&3\\
    3&3&2&2&1&0&0&0&0&0&0&0
\end{bmatrix} \quad \leadsto \quad \lambda_S = \ydiagram{5,3,3,3,3,2}.
    \]
    This construction makes it obvious that $|\delta_S| = |\lambda_S|$, and also that the largest hook length in $\lambda_S$ is the number of nonzero entries in $\delta_S$, which is at most $p$.
    The construction is easily invertible, so that every Young diagram with maximum hook length at most $p$ corresponds to the matrix $\delta_S$ for a unique $S \subseteq [p]$.
    Moreover, we point out that this polynomial $\sum_\lambda t^{\lambda}$ is the same as the polynomial $\widetilde{T}_{p-1}(t)$ defined in the nuclear physcis paper~\cite{Isachenkov}*{eqn.~(4.6)}.
    See also OEIS entry \href{https://oeis.org/A161161}{A161161}.  
\end{rem}

\begin{proof}[Proof of Theorem~\ref{thm:volume 2-row}]
    Armed with the generating function~\eqref{gen func 2-row}, we can employ the same method as in the proof of Theorem~\ref{thm:prob HM}.
    Letting ${\rm M}^k_{2 \times p}(\mathbb{N})$ denote the set of all $2 \times p$ matrices with entries in $\mathbb{N}$ summing to $k$, and ${\rm MM}_{2 \times p}^k(\mathbb{N})$ the subset of Monge matrices, we have
    \[
    \widehat{\rm Vol}(\M_{2 \times p}) = \lim_{k \rightarrow \infty} \frac{\#{\rm MM}_{2 \times p}^k (\mathbb{N})}{\#{\rm M_{2 \times p}^k (\mathbb{N})}}.
    \]
    For the denominator, we have 
    \[
    \#{\rm M}_{2 \times p}^k(\mathbb{N}) = \binom{k+ 2p - 1}{2p - 1} \sim \frac{k^{2p-1}}{(2p-1)!}.
    \]
    For the numerator, we apply Corollary~\ref{lemma:asymptotic} to the generating function~\eqref{gen func 2-row}, where $N(t) = \sum_S t^{|\delta_S|}$ and $D(t) = (1-t^2)^p \prod_{j=1}^p (1-t^j)$, and the root $\alpha = 1$ has multiplicity $2p$.
    Since there are $2^p$ subsets $S \subseteq [p]$, we have $N(1) = 2^p$.
    Hence Corollary~\ref{lemma:asymptotic} yields
    \[
    \#{\rm MM}_{2 \times p}^k (\mathbb{N}) \sim \frac{2^p}{(2p-1)!p! 2^p}k^{2p - 1}.
    \]
    The result follows upon canceling repeated factors.
\end{proof}

For matrices with more than two rows or columns, it becomes much more difficult to write down a Stanley decomposition because the same Monge matrix can be written as the sum of vertices in many more different ways.
Regardless, it seems to us that any such decomposition would not be \emph{pure} (in the sense that the free submonoids in each component have the same number of generators), which defeats the purpose of using the Stanley decomposition to write the generating function in a nice rational form.
There are certainly ad hoc methods for determining the generating function (and therefore the normalized volume), such as interpreting the matrices as degree matrices for monomials in the variables $x_{ij}$, and then using software (such as Macaulay2) to find the Hilbert series of the ring generated by the monomials of the vertices.
Using this method, one obtains the following normalized volumes:
\[
    \widehat{{\rm Vol}}(\M_{3 \times 3}) = \frac{17}{1296}, \qquad
    \widehat{{\rm Vol}}(\M_{3 \times 4}) = \frac{7}{10368}, \qquad
    \widehat{{\rm Vol}}(\M_{4 \times 4}) =\frac{361}{95551488}.
\]

\bibliographystyle{alpha}
\bibliography{references}

\end{document}